\theoremstyle{plain}
\newtheorem{thm}{Theorem}[section]
\newtheorem{lemme}[thm]{Lemma}
\newtheorem{prop}[thm]{Proposition}
\newtheorem{cor}[thm]{Corollary}
\newtheorem{rmq}[thm]{Remark}
\numberwithin{equation}{section}
\newcommand{\Cb}  {{\mathbb C}}
\newcommand{\Rb}  {{\mathbb R}}
\newcommand{\Qb}  {{\mathbb Q}}
\DeclareMathOperator{\re}{Re}
\DeclareMathOperator{\sgn}{sgn}
\newcommand{\izi}{\int\limits_0^{\infty}}
\newcommand{\id}[2]{\int\limits_{#1}^{#2}}
\newcommand{\izt}{\int\limits_0^t}
\newcommand{\lr}{\left(}
\newcommand{\rr}{\right)}
\newcommand{\lc}{\left[}
\newcommand{\rc}{\right]}
\newcommand{\ef}  {{\mathbf e}}
\newcommand{\tx}[1][x]{T_{#1}}
\newcommand{\txe}[1][x]{T_{#1}^{\varepsilon}}
\newcommand{\txz}[1][x]{T_{#1}^{0}}
\newcommand{\sel}[1][\lambda]{{S_{\ef_{#1}}}}
\newcommand{\dE} {{\widehat{E}}}
\newcommand{\dP} {{\widehat{P}}}
\newcommand{\dPb} {{\widehat{\mathbb{P}}}}
\newcommand{\dQ} {{\widehat{Q}}}
\newcommand{\dR} {{\widehat{R}}}
\newcommand{\dX} {{\widehat{X}}}
\newcommand{\dc} {{\widehat{c}}}
\newcommand{\dq} {{\widehat{q}}}
\newcommand{\dphi} {{\widehat{\phi}}}
\newcommand{\dk} {{\widehat{k}}}
\newcommand{\dpm} {{{\widehat{p}}^{(m,1)}_1}}
\newcommand{\dpmt} {{{\widehat{p}}^{(m,t)}}}
\newcommand{\dpms} {{{\widehat{p}}^{(m,s)}}}
\newcommand{\dr} {{\widehat{r}}}
\newcommand{\dn} {{\widehat{n}}}
\newcommand{\dhp} {{\widehat{h}^{'}}}
\newcommand{\hd} {{\widehat{h}}}
\newcommand{\dL} {{\widehat{L}}}
\newcommand{\dtau} {{\widehat{\tau}}}
\newcommand{\Ds} {{\mathcal D}}
\newcommand{\Fs} {{\mathcal F}}
\newcommand{\Ls} {{\mathcal L}}
\title[Contiditioned first passage times]{The first passage time of a stable process conditioned to not overshoot}
\author{Fernando Cordero}
\address{Faculty of Technology, University of Bielefeld, Universit\"{a}tsstr. 25, 33615 Bielefeld, Germany}
\email{fcordero@techfak.uni-bielefeld.de}
\thanks{This work was partially done during the Post-Doctoral position held by the author at the Faculty of Mathematics of the University of Vienna. The author gratefully acknowledges financial support from the European Research Council (ERC) under grant agreement No.~247033.}
\date{\today}%
\begin{document}

\begin{abstract}
Consider a stable Lévy process $X=(X_t,t\geq 0)$ and let $\tx$, for $x>0$, denote the 
first passage time of $X$ above the level $x$. In this work, we give an alternative proof of the absolute continuity of the law of $\tx$
and we obtain a new expression for its density function. Our constructive approach provides a new insight into the study of the law of $\tx$. 
The random variable $\txz$, defined as the limit of $\tx$ when the corresponding overshoot tends to $0$, plays an important role in obtaining these results. 
Moreover, we establish a relation between the random variable $\txz$ and the dual process conditioned to die 
at $0$. This relation allows us to link the expression of the density function of the law of $\tx$ presented in this paper to the already known results on this topic.
\end{abstract}
\subjclass[2010]{Primary 60G52; Secondary 60G51, 60G40}
\keywords{Lévy processes, stable processes, first passage times, absolute continuity.}
\maketitle


\section{Introduction}\label{1}
Stable Lévy processes have been intensively used as models in many different fields such as finance, physics, hydrology and biology. 
One reason for this is that the one-dimensional distributions of the stable processes are heavy tailed and stable under addition. These properties are suitable, especially in modelling random noise and uncertain errors (see \cite{Bosc}, \cite{Mid} and \cite{Shni}). Another important feature of the stable processes is the scaling property, which is often observed in financial time series (see e.g. \cite{Bapr} and \cite{Man}).

In this work, we consider a stable Lévy process $X=(X_t, t\geq 0)$ with index $\alpha\in(0,2)$ and we study the law of its first passage time above a positive level $x$, i.e. $\tx=\inf\{s>0: X_s\geq x\}$. More precisely, we are interested in the absolute continuity of this law and in the representations of its density function. 

The law of $\tx$ is related to the law of the past supremum $S_t=\sup\{X_s: s\in[0,t]\}$ by means of the identity in law:
\begin{equation}\label{is}
\tx\overset{(d)}{=}x^\alpha\, {S_1}^{-\alpha},
\end{equation}
which follows from the scaling property. The study of these random variables is part of the theory of fluctuations, and an extensive literature on this topic is available. In particular, it is well known that the law of $S_1$ is absolutely continuous with respect to the Lebesgue measure. A proof of this, 
as well as an integral representation for the density function $f_{S_1}$, can be found in \cite[Theorem 4, p. 168]{Dar} 
in the case of symmetric stable processes, and in \cite[Theorem 1, p. 422]{Hey} in the case of general stable processes. 
These results together with \eqref{is} imply that the law of $\tx$ is also absolutely continuous and that its density function is given by
\begin{equation}\label{isd}
f_{\tx}(t)=\frac{1}{\alpha}\,x\, t^{-\frac{1}{\alpha}-1}\,f_{S_1}(x\,t^{-\frac{1}{\alpha}}).
\end{equation}
Additionally, in some particular cases, explicit series representations for $f_{\tx}$ can be obtained. For example, when $X$ has no positive jumps, the density function of $X_t$, denoted by $p_t$, and $f_{\tx}$ are related through the Kendall's identity (see e.g. \cite[p. 190]{Ber} or \cite {Bobu}), and the known series representations for $p_t$ (see \cite[pp. 87-89]{Sat}) lead to series representations for $f_{\tx}$.
On the other hand, when $X$ has no negative jumps, infinite series representations for $f_{\tx}$ can be deduced from those given in \cite[Theorem 1]{bedape} for $f_{S_1}$. Recently, in \cite{Kuzne} Kuznetsov obtains series representations and asymptotic expansions for the density function of the supremum for a large class of stable processes (see also \cite{Kupa}). 

The first goal of this paper is to give an alternative proof of the absolute continuity of the law of $\tx$ and to provide a new representation for the density function $f_{\tx}$ in terms of the distribution of $\txz$, defined below. In a second step, we aim to relate the latter to other classical quantities in fluctuation theory and, in particular, to infer from our results the already known density representations for $f_{S_1}$ obtained by Doney and Savov in \cite{Dosa} and Chaumont in \cite{Chaum4} (see also \cite{Alchau}).  

Our approach is based on the following observation. Note that, using the scaling property, the probability
$P(t-\varepsilon<\tx[1]\leq t)$ equals to $P\left(\tx[y_t]\leq1,\,\tx[y_{t-\varepsilon}]>1\right),$
where $y_u=u^{-1/\alpha}$. As an application of the strong Markov property at the time $\tx[y_t]$, we see that
$$P(t-\varepsilon<\tx[1]\leq t)=E\left[1_{\{\tx[y_t]\leq1,\,K_{y_t}\leq \,y_{t-\varepsilon}-y_t \}}\,\Phi\left(y_{t-\varepsilon}-y_t-K_{y_t},1-\tx[y_t]\right)\right],$$
where $\Phi(z,r)=P(\tx[z]>r)$, and $K_y=X_{\tx[y]}-y$ is the overshoot of $X$ at time $\tx[y]$. Informally, dividing by $\varepsilon$ in the previous identity and taking the limit when $\varepsilon$ tends to $0$, we can expect to find a relation between the absolute continuity of the law of $T_1$ and the asymptotic properties of the joint law of $\left(\tx[y_t],K_{y_t}\right)$ when $K_{y_t}$ is forced to converge to $0$. The latter problem is studied in \cite{FC} as follows. First, for each $\varepsilon>0$, the random variable $\tx^{\varepsilon}$ is defined as the random variable $\tx$ given that the corresponding overshoot $K_x$ is smaller than $\varepsilon$. Then, it is proven that, when $\varepsilon$ tends to $0$, the random variables $\tx^{\varepsilon}$ converge in distribution to a random variable denoted by $\txz$. Finally, the law of $\txz$ is characterized in terms of the law of $\tx$. 


The organization of the paper is as follows. In Section 2, we start by giving some definitions and recalling some preliminary facts about stable 
processes. Then, we give the definition of the random variable $\txz$ as well as some results concerning the characterization of its law.

In Section 3, we establish the asymptotic behaviour around $0$ of the distribution function of $\txz$, based on the tail distribution of the variable $S_1$ .

In Section 4, we show a simple relation between the Mellin transforms of the variables $\tx$ and $\txz$. Using this relation, we give a direct proof 
of the absolute continuity of the law of $\tx$ and simultaneously, we obtain the desired representation for its density function.

In Section 5, we look at the process $(X_s, 0\leq s\leq t)$ on $\{t<\tx\}$ conditioned on the event that the overshoot at time $\tx$ is smaller than $\varepsilon$, and then, we consider 
the limit when $\varepsilon$ tends to $0+$. This approach provides a relation between the asymptotic law and the law of the dual process killed upon leaving $[0,\infty)$ and conditioned to die at $0$. As a consequence of this relation and the expression of the density function $f_{\tx}$ obtained in Section 3, we retrieve both,  
Chaumont and Doney and Savov representations of the density function $f_{S_1}$.


\section{Preliminaries}\label{2}
In this section, we provide definitions, notations and already known results in the theory of stable Lévy processes which are used along the paper.
\subsection{Definitions and Notations}\label{2.1}
Let $\Ds=\Ds([0,\infty))$ be the space of càdlàg trajectories $\omega:[0,\infty)\rightarrow\Rb\cup\{\infty\}$. The lifetime of a trajectory $\omega\in \Ds$ is defined as $\varsigma(\omega)=\inf\{s\geq 0:\,\omega(s)=\infty\}$, with the usual convention $\inf\{\emptyset\}=\infty$. The space $\Ds$ is endowed with the Skorokhod's topology. We denote by $X=(X_t, t\geq 0)$ the coordinate process on $\Ds$ and by $(\Fs_t,t\geq 0)$ the natural filtration generated by $X$.

Let $P$ be the law on $\Ds$ of a stable Lévy process with index $\alpha\in (0,2)$. It means that, under $P$, the coordinate process $X$ has stationary and independent increments and satisfies the scaling property 
$$(X_{at}, t\geq 0)\overset{(d)}{=}(a^{1/\alpha}\,X_{t}, t\geq 0),\quad\textrm{for all } a>0.$$ 
In particular, under $P$, the process $X$ starts from $0$, i.e. $P(X_0=0)=1$. Moreover, for each $x\in\Rb$, we denote by $P_x$ the law of the canonical process starting from $x$, i.e. the law of $X+x$ under $P$. We write $P=P_0$.

The characteristic exponent of $X$, $\psi(\lambda)=-\ln\left(E[\exp(i\lambda X_1)]\right)$, has the form (see \cite{Ber}, \cite{zolo1} and \cite{zolo})
\begin{equation}\label{cf}
\psi(\lambda)=c{|\lambda|}^\alpha\left(1-i\sgn{(\lambda)}\tan\left(\pi\alpha(2\rho-1)/2\right)\right),
\end{equation}
where $c>0$ is a constant and $\rho=P(X_1>0)$ is the positivity parameter. For the sake of simplicity, we exclude the case when $\alpha=1$ and $\rho\neq 1/2$, i.e. we assume that $X$ is not an asymmetric Cauchy process.

Let $T_A$ denote the entrance time of a set $A\subset\Rb$ of $X$:
$$T_A=\inf\{s\geq 0: X_s\in A\}.$$
When the set $A$ has the form $[x,\infty)$ for some $x>0$, we simply write 
$\tx$ instead of $T_{[x,\infty)}$ for the first passage time of $X$ above the level $x$. We denote the overshoot at time $\tx$ by $K_x$, i.e. $K_x=X_{\tx}-x.$

We denote by $\dP$ the law of the dual process $\dX=-X$ under $P$ and, for $x>0$, by $\dP_x$ the law of the dual process starting at $x$, i.e. the law of
$x+\dX$ under $P$.

For $x>0$, $Q_x$ denotes the law on $\Ds$ of the process $(X,P_x)$ killed when it leaves $[0,\infty)$, i.e.
$$Q_x\left(\Lambda, t<\varsigma\right)=P_x\left(\Lambda, t<T_{(-\infty,0)}\right),\quad t\geq 0, \Lambda\in\Fs_t.$$
The process $(X,Q_x)$ is a Markov process on $(0,\infty)$. Moreover, the absolute continuity of the law of $X_t$ clearly implies the absolute continuity of the semigroup of $(X,Q_x)$. We denote the corresponding densities by $(q_t,t\geq 0)$.
In the same way, but replacing $X$ by its dual $\dX$, we define, for $x>0$, the measure $\dQ_x$ and the corresponding semigroup densities $(\dq_t,t\geq 0)$. 

Let $S_t$ and $I_t$ denote the past supremum and the past infimum of $X$ respectively, that is, for all $t\geq 0$,
$$S_t=\sup\{X_s:\,0\leq s\leq t\}\quad\textrm{ and }\quad I_t=\inf\{X_s:\,0\leq s\leq t\}.$$

It is well known that the reflected processes $R=S-X$ and $\dR=X-I$ verify the strong Markov property (see \cite{Bin}). Since $0$ is regular for 
$(0,\infty)$ with respect to $X$, we can define a local time at $0$ of $R$ which we denote by $L$.  Let $\tau$ be 
the right continuous inverse of the local time $L$. Then, $\tau$
is a stable subordinator of index $\rho$ (see \cite[Lemma 1, Chap. VIII]{Ber}). Similarly, but replacing $R$ by $\dR$, we define $\dL$ the local time at $0$ of $\dR$, and $\dtau$ its right continuous inverse, which is a stable subordinator of index $1-\rho$. The local times $L$ and $\dL$ are unique up to multiplicative constant, and we choose a normalization, so that the Laplace exponents of $\tau$ and $\dtau$ are given by $\phi(q)=q^\rho$ and $\dphi(q)=q^{1-\rho}$.

The Itô measures of excursions away from $0$ of the processes $R$ and $\dR$ are denoted by $n$ and $\dn$. These measures are Markovian, and their corresponding semigroups are given by $(\dq_t,t\geq 0)$ and $(q_t,t\geq 0)$. We denote by $(r_t,t>0 )$ and $(\dr_t,t>0 )$ 
the densities of the entrance laws of the reflected excursions at the maximum and at the minimum, i.e. for $t>0$,
$$r_t(x)=n\left(X_t\in dx,\, t<\varsigma\right)/dx\quad\textrm{and}\quad \dr_t(x)=\dn\left(X_t\in dx,\, t<\varsigma\right)/dx.$$
For $t>0$, $P^{(m,t)}$ denotes the law of the stable meander of length $t$, that is, the image of $n(\cdot|\varsigma>t)$ under the mapping $w\mapsto (w(s),0\leq s\leq t)$. The law of the dual stable meander of length $t$, $\dP^{(m,t)},$ is defined in the same way replacing $n$ by $\dn$.

In the remainder of the paper, we assume in addition that $X$ is not a subordinator and that $X$ is not spectrally negative. Since we also exclude the asymmetric Cauchy case, we are restricted to the following situations: 
\begin{equation}\label{calrho}
 (0<\alpha<1\,\wedge\, 0\neq\rho\neq1)\vee(\alpha=1 \,\wedge\,\rho=1/2)\vee(1<\alpha<2\,\wedge\,\alpha\rho<1).
\end{equation}

\subsection{\texorpdfstring{Representations of the Density Function $f_{S_1}$}{}}\label{2.2}
In this work, we are interested in obtaining new representations of the density function of the first passage time above the level $x$. From \eqref{isd}, this problem is equivalent to obtaining representations of the density function of the past supremum at time $1$. In this section, we recall two known representations of the latter density function.

The following expression is provided by Doney and Savov in \cite[Lemma 8]{Dosa}:
\begin{equation}\label{ci2}
f_{S_1}(x)=\frac{\sin(\pi\rho)}{\pi}\int\limits_0^1 \frac{\dpms_s(x)}{s^{1-\rho}\,(1-s)^\rho}ds,\quad x>0,
\end{equation}
where $\dpmt_t$ is the density function of the terminal value of the dual stable meander of length $t$. 
A similar result is obtained by Chaumont in \cite{Chaum4}, where he proves that the density function $f_{S_1}$ admits the following representation:
\begin{equation}\label{ci}
f_{S_1}(x)=\frac{1}{\Gamma(1-\rho)}\int\limits_0^1 \frac{\dr_s(x)}{(1-s)^\rho}ds,\quad x>0.
\end{equation}
Both expressions are equivalent, and we can pass from one 
to the other by using the elementary identity
\begin{equation}\label{frtpm}
 \dr_t(x)=\frac{1}{\Gamma(\rho)}\,t^{\rho-1}\,\dpmt_t(x).
\end{equation}
These representations are obtained by using the excursion theory for Lévy processes.
\subsection{\texorpdfstring{The Asymptotic Variable $\txz$}{}}In this section, we recall the definition and the main properties of the random variable $\txz$, which was introduced in \cite{FC} as the first passage time above the level $x$, given that the corresponding overshoot is $0$. The main results of this paper involve the law of $\txz$ and its properties. In particular, this random variable plays a key role in obtaining a new representation for the density function $f_{\tx}$. 

First, for each $x>0$ and $\varepsilon>0$, we introduce the random variable $\txe$ whose law is given by
$$P(\txe\in\cdot\,\,)=P(\tx\in\cdot\,\,\,|\,K_x\leq \varepsilon).$$
In \cite[Theorem 2]{FC} it is proven that, for each $x>0$, the family of random variables ${\{\txe\}}_{\varepsilon>0}$ converges in law as $\varepsilon$ tends to $0+$. The limit is denoted by $\txz$ and its law is given by 
$$P\left(\,\txz\leq\, t\,\right)=\frac{\sin (\pi\rho)}{k_0\,\pi\rho}\,x^{-\alpha\rho}E\left[\frac{\tx \,\,1_{\{\tx\leq\, t\}}}{{\left(t-\tx\right)}^{1-\rho}}\right].$$ 
An alternative characterization of the distribution of $\txz$ is given by its Laplace transform (see \cite[Proposition 5]{FC}):
\begin{equation}\label{eq1}
 E\left[\exp(-\lambda\txz\;)\right]=\frac{1}{k_0 \Gamma(1-\rho)\,\alpha\rho}\,x^{1-\alpha\rho}\,\lambda^{-\rho}\,f_\lambda(x),
\end{equation}
where $f_\lambda$ is the density function of the law of $\sel$ and $\ef_\lambda$ is an exponential random variable with parameter $\lambda$, independent of $X$.
Additionally, from the scaling property of $X$ it is straightforward to prove that
$$\txz[ax]\overset{(d)}{=}a^\alpha\txz.$$
\begin{rmq}
The aforementioned results concerning the distribution of $\txz$ are stated in \cite{FC} for stable processes with index $\alpha\in(1,2)$ such that $\alpha\rho<1$. However, these results remain true, without any modification on their proofs, under the weaker condition \eqref{calrho}. Indeed, the only restriction in \cite{FC} is the applicability of \cite[Theorem 3a and 4a]{Bin} (see \eqref{a1}, \eqref{a2}). Both results referring to stable processes, the first one needs that $|X|$ is not a subordinator and the second one that $X$ has positive jumps.
\end{rmq}

\subsection{Some Asymptotic Probabilities}
The law of the overshoot $K_x$ is well known in the literature. In \cite{Ray}, Ray gives an expression for its density function in the symmetric case. In \cite{Bin}, Bingham generalizes 
this result to the case when $X$ has positive jumps (see also \cite{yayayor2} and \cite{FC}). From these results, we have that
\begin{equation}\label{lxtx}
X_{\tx}    \overset{(d)}{=}  x\, \beta_{{\alpha\rho},1-{\alpha\rho}}^{-1},
\end{equation}
where $\beta_{{\alpha\rho},1-{\alpha\rho}}$ is a beta random variable with parameters $\alpha\rho$ and $1-{\alpha\rho}$. As a consequence (see \cite[Lemma 2]{FC}), the 
asymptotic behaviour of the distribution function of $K_x$ around $0+$ is given by
\begin{equation}\label{adkx}
P(K_x\leq h)\sim \frac{\sin (\pi\alpha\rho)}{\pi(1-\alpha\rho)}\;{\lr\frac{h}{x}\rr}^{1-\alpha\rho}\;\;\textrm{ as }\;\;h\rightarrow 0+.
\end{equation}
Other results that we use in this work concern the behaviour of the distribution of $S_1 $ around $0+$ and its asymptotic tail distribution, which are given by (\cite[Theorems 3a and 4a]{Bin}) 
\begin{equation}\label{a1}
 P(S_1\leq x)\sim k_0\, x^{\alpha\rho}\;\;\textrm{ as }\;\;x\rightarrow 0+,
\end{equation}
and
\begin{equation}\label{a2}
 P(S_1> x)\sim k_\infty\, x^{-\alpha}\;\;\textrm{ as }\;\;x\rightarrow \infty,
 \end{equation}
where $k_0$ and $k_\infty$ are explicit constants given by
$$k_0=\frac{1}{{c_1}^\rho\,\Gamma(1-\rho)\Gamma(1+\alpha\rho)},\quad k_\infty=c_1\,\Gamma(\alpha)\,\frac{\sin(\pi\alpha\rho)}{\pi},$$
and $c_1=c\,\left|\sec\left(\pi\alpha(2\rho-1)/2\right)\right|$, where $c$ is the constant in \eqref{cf}.


\section{\texorpdfstring{The Distribution of $\txz$ around $0+$}{}}\label{3}
In this section, we study the behaviour of the distribution function of $\txz$ around $0$. More precisely, we establish the rate of convergence to $0$ of $P(\txz\leq u)$ when $u$ tends to $0+$. 

The Tauberian theorem tells us that the behaviour around $0$ of the distribution function of $\txz$ is related with the behaviour of its Laplace transform at $\infty$ (see \cite[p. 10]{Ber}). Thus, thanks to \eqref{eq1}, it is enough to study the asymptotic behaviour of $f_\lambda(x)$ when $\lambda$ tends to $\infty$. In addition, the function $f_\lambda$ can be expressed as (see \cite[Proposition 3]{FC})
\begin{equation}\label{eq2}
f_\lambda(x)=\frac{\lambda\alpha}{x}E\left[\tx \exp(-\lambda\tx)\right],\quad x>0.
\end{equation}
For $x>0$, we introduce the function $\ell_x:(0,\infty)\rightarrow (0,\infty)$ defined by
$$\ell_x(\lambda)=\lambda f_\lambda(x),\quad \lambda>0.$$
\begin{lemme}\label{l1}
 For all $x>0$, the function $\ell_x$ is slowly varying at $\infty$, more precisely,
$$\lim\limits_{\lambda\rightarrow \infty}\ell_x (\lambda)=\frac{\alpha k_\infty}{x^{\alpha+1}}.$$
\end{lemme}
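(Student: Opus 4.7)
\bigskip

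\noindent\textbf{Proof plan.} The plan is to combine three ingredients: the formula \eqref{eq2} expressing $f_\lambda(x)$ through $E[\tx\exp(-\lambda\tx)]$, the scaling identity \eqref{is}, and Karamata's Tauberian theorem applied near $0$. The target asymptotic is equivalent to showing
\[
\lambda^{2}\,E\!\left[\tx\exp(-\lambda\tx)\right]\;\longrightarrow\;\frac{k_\infty}{x^{\alpha}}\qquad(\lambda\to\infty),
\]
since by \eqref{eq2} one has $\ell_x(\lambda)=\frac{\alpha\lambda^{2}}{x}E[\tx\exp(-\lambda\tx)]$. Once this is proved, the claim follows immediately, and slow variation at $\infty$ is automatic because the limit is a finite non-zero constant.

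\medskip

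\noindent\textbf{Step 1: the short-time tail of $\tx$.} The identity in law \eqref{is} gives $P(\tx\leq t)=P(S_1\geq xt^{-1/\alpha})$. As $t\to 0+$ the argument $xt^{-1/\alpha}\to\infty$, so the asymptotic \eqref{a2} yields
\[
F_{\tx}(t)\;:=\;P(\tx\leq t)\;\sim\;k_\infty\,x^{-\alpha}\,t\qquad(t\to 0+).
\]
Thus $F_{\tx}$ is regularly varying of index $1$ at $0$ with slowly varying part the constant $k_\infty x^{-\alpha}$.

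\medskip

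\noindent\textbf{Step 2: asymptotics of the auxiliary integral.} Define the non-decreasing function $H(t):=\int_{0}^{t} s\,dF_{\tx}(s)$. An integration by parts (using $F_{\tx}(0)=0$) gives $H(t)=t F_{\tx}(t)-\int_{0}^{t} F_{\tx}(s)\,ds$. Using Step 1 and the Karamata integration lemma $\int_{0}^{t}F_{\tx}(s)\,ds\sim \tfrac{1}{2}tF_{\tx}(t)$, I obtain
\[
H(t)\;\sim\;\frac{k_\infty\,x^{-\alpha}}{2}\,t^{2}\;=\;\frac{k_\infty\,x^{-\alpha}}{\Gamma(3)}\,t^{2}\qquad(t\to 0+).
\]

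\medskip

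\noindent\textbf{Step 3: Tauberian conversion.} Because $H$ is non-decreasing and $\int_{0}^{\infty}e^{-\lambda t}\,dH(t)=E[\tx\exp(-\lambda\tx)]$, Karamata's Tauberian theorem (cited in the paper as \cite[p.\ 10]{Ber}), applied at $0$/$\infty$ with index $\beta=2$ and slowly varying part $k_\infty x^{-\alpha}$, yields
\[
E[\tx\exp(-\lambda\tx)]\;\sim\;\frac{k_\infty\,x^{-\alpha}}{\lambda^{2}}\qquad(\lambda\to\infty).
\]
Multiplying by $\alpha\lambda^{2}/x$ gives the claimed limit $\alpha k_\infty/x^{\alpha+1}$.

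\medskip

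\noindent\textbf{Main obstacle.} The only non-routine point is making sure the Karamata machinery is being invoked in the right direction. The issue is that the classical statement relates behaviour of a measure near $0$ to its Laplace transform at $\infty$ and requires monotonicity; the function $H$ is tailor-made for this (it is the distribution function of the measure $s\,dF_{\tx}(s)$ and so is automatically non-decreasing), which is why I route the argument through $H$ rather than trying to differentiate the asymptotic of $E[\exp(-\lambda\tx)]$ in $\lambda$. The tail asymptotic \eqref{a2} is used in an essential way here, since $k_\infty$ is precisely the constant that emerges.
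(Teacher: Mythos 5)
Your proof is correct, but it takes a genuinely different route from the paper's. Both arguments start from the same reduction via \eqref{eq2}, namely $\ell_x(\lambda)=\frac{\alpha\lambda^2}{x}E[\tx e^{-\lambda\tx}]$, and both ultimately rest on the single input $P(\tx\leq t)\sim k_\infty x^{-\alpha}t$ as $t\to 0+$, obtained from \eqref{is} and \eqref{a2}. From there the paper proceeds by a direct computation: it writes $\lambda^2 \tx e^{-\lambda\tx}$ as $-\lambda\int_{\lambda\tx}^{\infty}e^{-v}(1-v)\,dv$, applies Fubini to convert the expectation into an integral of $P(\tx<v/\lambda)$ against $e^{-v}(1-v)\,dv$, factors out the bounded function $U(y)=P(S_1>y)y^\alpha/k_\infty$ via scaling, and concludes by dominated convergence. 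You instead route the argument through Karamata theory: you form the monotone function $H(t)=\int_0^t s\,dF_{\tx}(s)$, derive $H(t)\sim\frac{1}{2}k_\infty x^{-\alpha}t^2$ from the Karamata integration lemma, and then pass to the Laplace transform. One small point of precision: the step from the asymptotics of $H$ at $0$ to the asymptotics of its Laplace transform at $\infty$ is the Abelian direction of Karamata's theorem, which holds unconditionally for measures, so your emphasis on the monotonicity of $H$ (needed only for the converse, Tauberian, direction) is harmless but not the crux. The paper's approach is more self-contained — only Fubini and dominated convergence — and keeps the constant $k_\infty/\Gamma(3)=k_\infty/2$ from ever appearing; yours is arguably more systematic and would adapt with no extra work if $F_{\tx}$ were regularly varying at $0$ with a non-integer index or a non-constant slowly varying part, where the explicit substitution trick of the paper would be less transparent.
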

\begin{proof}
First note that using \eqref{eq2} and applying the Fubini's theorem,
\begin{align*}
 \ell_x(\lambda)&=-\frac{\alpha\lambda}{x}\,E\left[\,\int\limits_{\lambda \tx}^{\infty}e^{-v}(1-v)\, dv\right]=-\frac{\alpha\lambda}{x}\,\izi e^{-v}(1-v)\,P\left(\tx<\frac{v}{\lambda}\,\right)\,dv,
\end{align*}
and then, thanks to the scaling property (or directly using \eqref{is}), we obtain
\begin{align*}
 \ell_x(\lambda)&=-\frac{\alpha k_\infty}{x^{\alpha+1}}\,\izi e^{-v}(v-v^2)\,U\left(\frac{\lambda^{1/\alpha}x}{v^{1/\alpha}}\right)\,dv,
\end{align*}
where $U(y)=\frac{P(S_1>y)\,y^\alpha}{k_\infty}$. By \eqref{a2}, the function $U$ is bounded and $\lim\limits_{y\rightarrow \infty}U(y)=1$. Thus, 
the result follows as an application of the dominated convergence theorem.
\end{proof}
The next proposition provides the desired rate of convergence to $0$ of $P(\txz\leq u)$ when $u$ tends to $0$.
\begin{prop}\label{prop1}
For every $x>0$, we have
$$P(\txz\leq u)\sim \frac{k_\infty}{k_0}\,\frac{\sin(\pi\rho)}{\pi\rho^2 (1+\rho)}\,x^{-\alpha(1+\rho)}\, u^{1+\rho}\;\;\textrm{ as }\;\;u\rightarrow 0+.$$
\end{prop}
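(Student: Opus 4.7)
The plan is to convert the statement into a Tauberian assertion about the Laplace transform of $\txz$ at infinity, which is exactly what Lemma \ref{l1} has been set up to provide.

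First, I would rewrite the characterisation \eqref{eq1} of the Laplace transform of $\txz$ in terms of the auxiliary function $\ell_x$ of Lemma \ref{l1}:
\begin{equation*}
E\left[\exp(-\lambda \txz)\right] = \frac{x^{1-\alpha\rho}}{k_0\,\Gamma(1-\rho)\,\alpha\rho}\,\lambda^{-(1+\rho)}\,\ell_x(\lambda).
\end{equation*}
Lemma \ref{l1} gives $\ell_x(\lambda)\to \alpha k_\infty\,x^{-(\alpha+1)}$ as $\lambda\to\infty$, so
\begin{equation*}
E\left[\exp(-\lambda \txz)\right] \sim \frac{k_\infty}{k_0\,\Gamma(1-\rho)\,\rho}\,x^{-\alpha(1+\rho)}\,\lambda^{-(1+\rho)} \quad\text{as}\quad \lambda\to\infty,
\end{equation*}
where I used $x^{1-\alpha\rho}\cdot x^{-(\alpha+1)}=x^{-\alpha(1+\rho)}$.

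Next, I would invoke the Karamata Tauberian theorem (e.g. \cite[Theorem 1.7.1 and Corollary]{Ber} style version for Laplace--Stieltjes transforms at infinity, corresponding to the distribution function at the origin): for a probability measure $\mu$ on $[0,\infty)$ and $\sigma>0$, one has
$\widehat{\mu}(\lambda)\sim C\,\lambda^{-\sigma}$ as $\lambda\to\infty$ if and only if $\mu([0,u])\sim \frac{C}{\Gamma(1+\sigma)}\,u^{\sigma}$ as $u\to 0^+$. Applying this with $\sigma=1+\rho$ and $C=\frac{k_\infty}{k_0\,\Gamma(1-\rho)\,\rho}\,x^{-\alpha(1+\rho)}$ yields
\begin{equation*}
P(\txz\leq u) \sim \frac{k_\infty}{k_0\,\rho\,\Gamma(1-\rho)\,\Gamma(2+\rho)}\,x^{-\alpha(1+\rho)}\,u^{1+\rho}\quad\text{as}\quad u\to 0^+.
\end{equation*}

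Finally I would simplify the constant. Using $\Gamma(2+\rho)=(1+\rho)\,\rho\,\Gamma(\rho)$ and the reflection formula $\Gamma(\rho)\Gamma(1-\rho)=\pi/\sin(\pi\rho)$, the denominator becomes
\begin{equation*}
\rho\,\Gamma(1-\rho)\,(1+\rho)\,\rho\,\Gamma(\rho) = \rho^2(1+\rho)\cdot\frac{\pi}{\sin(\pi\rho)},
\end{equation*}
which reproduces exactly the prefactor $\frac{\sin(\pi\rho)}{\pi\rho^2(1+\rho)}$ in the statement. The only step that requires any care is verifying the hypotheses of the Tauberian theorem, which is routine here because $P(\txz\leq\cdot)$ is a genuine distribution function and $\ell_x(\lambda)$ has a positive finite limit at infinity so $\lambda^{-(1+\rho)}\ell_x(\lambda)$ is regularly varying of index $-(1+\rho)<0$; the main conceptual obstacle has already been absorbed into Lemma \ref{l1}.
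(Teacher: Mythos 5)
Your proof is correct and follows exactly the paper's route: rewrite the Laplace transform \eqref{eq1} in terms of $\ell_x$, apply Lemma \ref{l1} to get $E[\exp(-\lambda\txz)]\sim \frac{k_\infty}{k_0}\frac{x^{-\alpha(1+\rho)}}{\rho\,\Gamma(1-\rho)}\,\lambda^{-(1+\rho)}$, and conclude by the Karamata Tauberian theorem. The only difference is that you spell out the constant simplification via $\Gamma(2+\rho)=(1+\rho)\rho\Gamma(\rho)$ and the reflection formula, which the paper leaves implicit; the arithmetic checks out.
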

\begin{proof}
\noindent By \eqref{eq1}, the Laplace transform of $\txz$ can be expressed as
\begin{equation*}
 E\left[\exp(-\lambda\txz\;)\right]=\frac{1}{k_0 \Gamma(1-\rho) \alpha\rho}\,x^{1-\alpha\rho}\,\lambda^{-{(1+\rho)}}\,\ell_x(\lambda).
\end{equation*}
\noindent So, according to Lemma \ref{l1}, we obtain
\begin{equation*}
 E\left[\exp(-\lambda\txz\;)\right]\sim\frac{k_\infty}{k_0}\,\frac{x^{-\alpha(1+\rho)}}{\rho\,\Gamma(1-\rho)}\,\lambda^{-{(1+\rho)}}\;\;\textrm{ as }\;\;\lambda\rightarrow \infty.
\end{equation*}
The result is a consequence of the Tauberian theorem (see \cite[p. 10]{Ber}).
\end{proof}
\begin{rmq}
By the scaling property, \eqref{a2} can be rephrased in terms of the distribution of $\tx$ as follows:
$$P(\tx\leq u)\sim k_\infty\, x^{-\alpha}\,u\;\;\textrm{ as }\;\;u\rightarrow 0+.$$
In particular, we note that $P(\txz\leq u)$ converges faster to $0$ when $u$ tends to $0$ than $P(\tx\leq u)$.
\end{rmq}


\section{\texorpdfstring{Absolute Continuity and Density Representation of the Law of $\tx$}{}}\label{4}
The purpose of this section is to provide an alternative proof for the absolute continuity of the law of $\tx$ and to obtain
a new representation for its density function $f_{\tx}$ in terms of the law of the random variable $\txz$.
\subsection{Mellin Transforms}
Note that, from \eqref{a1}, \eqref{a2} and the relation \eqref{is}, we get that, for all $\gamma\in(-1,\rho)$, $E\left[{\tx}^\gamma\,\right]<\infty.$
On the other hand, by Proposition \ref{prop1}, we have that for all $\beta\in[0,1+\rho)$, $E\left[{(\txz)}^{-\beta}\,\right]<\infty.$
In fact, there is a simple relation between the moments of $\tx$ and those of $\txz$, which is stated in the next proposition.
\begin{prop}\label{propm}
For all $x>0$ and $\beta\in\Cb$ with $\re(\beta)\in(0,1+\rho)$, we have
\begin{equation}\label{mom}
 E\left[{\left(\frac{1}{\txz}\right)}^\beta\,\right]=\frac{1}{k_0 \rho\, B(\beta, 1-\rho)}\,x^{-\alpha\rho}\,E\left[{\left(\frac{1}{\tx}\right)}^{\beta-\rho}\,\right],
\end{equation}                                     
where $B(\cdot,\cdot)$ is the beta function. In particular, we have                              
$$E\left[{\left(\frac{1}{\txz}\right)}^\rho\,\right]=\frac{\sin(\pi\rho)}{k_0\pi }\, x^{-\alpha\rho}.$$
\end{prop}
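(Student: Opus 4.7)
My plan is to obtain \eqref{mom} by Mellin-inverting the Laplace transform identity for $\txz$ given by \eqref{eq1}. Plugging \eqref{eq2} into \eqref{eq1} first produces the cleaner relation
\[
E[\exp(-\lambda\txz)]=\frac{x^{-\alpha\rho}}{k_0\,\Gamma(1-\rho)\,\rho}\,\lambda^{1-\rho}\,E[\tx e^{-\lambda\tx}],\quad\lambda>0.
\]
I would then multiply both sides by $\lambda^{\beta-1}/\Gamma(\beta)$ and integrate over $\lambda\in(0,\infty)$, using the classical identity
\[
t^{-\beta}=\frac{1}{\Gamma(\beta)}\int_0^\infty\lambda^{\beta-1}e^{-\lambda t}\,d\lambda,\qquad t>0,\ \re(\beta)>0,
\]
together with Fubini's theorem. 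The left-hand side becomes $E[(\txz)^{-\beta}]$, while on the right-hand side the same inversion applied inside the expectation, with exponent $\beta-\rho+1$, gives
\[
\frac{x^{-\alpha\rho}\,\Gamma(\beta-\rho+1)}{k_0\,\Gamma(1-\rho)\,\rho\,\Gamma(\beta)}\,E\bigl[\tx^{-(\beta-\rho)}\bigr].
\]
Collecting the gamma factors via $B(\beta,1-\rho)=\Gamma(\beta)\Gamma(1-\rho)/\Gamma(\beta-\rho+1)$ then yields \eqref{mom}. The particular case $\beta=\rho$ follows by substitution and Euler's reflection formula $\Gamma(\rho)\Gamma(1-\rho)=\pi/\sin(\pi\rho)$.

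The main obstacle is the justification of Fubini's theorem throughout the strip $\re(\beta)\in(0,1+\rho)$. I would verify absolute integrability of $\lambda^{\re(\beta)-\rho}E[\tx e^{-\lambda\tx}]$ on $(0,\infty)$ at both endpoints. Near $\lambda=0$, combining the scaling identity \eqref{is} with the small-ball estimate \eqref{a1} gives $P(\tx>t)\sim k_0 x^{\alpha\rho}t^{-\rho}$ as $t\to\infty$, and a standard Tauberian argument (differentiating the Laplace transform) yields $E[\tx e^{-\lambda\tx}]$ of order $\lambda^{\rho-1}$ as $\lambda\to 0+$, so the integrand is integrable near $0$ iff $\re(\beta)>0$. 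Near $\lambda=\infty$, the remark following Proposition \ref{prop1} gives $f_{\tx}(0+)=k_\infty x^{-\alpha}$, whence by the Laplace method $E[\tx e^{-\lambda\tx}]\sim f_{\tx}(0+)\lambda^{-2}$ as $\lambda\to\infty$, which is integrable at infinity iff $\re(\beta)<1+\rho$. This upper bound matches the finiteness of $E[(\txz)^{-\re(\beta)}]$ needed to make sense of the left-hand side, itself a consequence of Proposition \ref{prop1}. Since both sides of \eqref{mom} are holomorphic in $\beta$ on this strip, the identity proven by Fubini for real $\beta$ extends to all complex $\beta$ with $\re(\beta)\in(0,1+\rho)$.
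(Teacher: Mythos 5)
The proposal is correct and follows essentially the same route as the paper: write $E[(\txz)^{-\beta}]$ as a $\Gamma(\beta)^{-1}$-weighted Mellin integral of the Laplace transform, substitute \eqref{eq1} and \eqref{eq2}, and apply Fubini to produce $\Gamma(\beta-\rho+1)\,E[\tx^{\rho-\beta}]$ and hence the beta-function constant. The one caveat is that your justification of integrability at $\lambda\to\infty$ should not invoke $f_{\tx}(0+)$, since the existence of that density is exactly what Theorem \ref{thmactx} later deduces from this proposition; the needed bound $E[\tx e^{-\lambda\tx}]=O(\lambda^{-2})$ already follows from the distributional estimate $P(\tx\le u)=O(u)$ as $u\to 0+$, and for real $\beta$ one can bypass the estimates entirely because all integrands are nonnegative and Tonelli applies.
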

\begin{proof}
By the scaling property of $\txz$ and $T_x$, it suffices to prove the result for $x=1$. Fix $\beta\in\Cb$ with $\re(\beta)\in(0,1+\rho)$. By using the identity
$x^{-\beta}=\frac{1}{\Gamma(\beta)}\izi e^{-xs}s^{\beta-1}ds,$ 
we obtain
$$E\left[{\left(\frac{1}{\txz[1]}\right)}^\beta\,\right]=\frac{1}{\Gamma(\beta)}\izi E\lc e^{-s\txz[1]}\rc\,s^{\beta-1}\,ds.$$
Thus, using \eqref{eq1} and \eqref{eq2}, we deduce from the previous identity that
\begin{align*}
 E\left[{\left(\frac{1}{\txz[1]}\right)}^\beta\,\right]&=\frac{1}{k_0\rho\Gamma(\beta)\Gamma(1-\rho)}\izi E\lc T_1 e^{-s T_1}\rc\,s^{\beta-\rho}\,ds\\
 &=\frac{1}{k_0\rho\,\Gamma(\beta)\Gamma(1-\rho)} E\lc T_1\izi e^{-s T_1} s^{\beta-\rho}\,ds\rc\quad\textrm{(Fubini's theorem)}\\
 &=\frac{\Gamma(\beta-\rho+1)}{k_0\rho\,\Gamma(\beta)\Gamma(1-\rho)} E\lc T_1^{\rho-\beta}\rc,
 \end{align*}
which proves the result.
\end{proof}
\subsection{\texorpdfstring{Density Representation of the Law of $\tx$}{}}
In the next theorem, we use the relation between the moments of $\tx$ and the moments of $\txz$ stated in Proposition \ref{propm} in order to obtain a new representation for the law of $\tx$. More precisely, by an inversion method, we deduce from \eqref{mom} the absolute continuity of this law as well as an expression for its density function.
\begin{thm}\label{thmactx}
For all $x>0$, the law of $\tx$ is absolutely continuous with respect to the Lebesgue measure on $[0,\infty)$ and its density function is given by
$$f_{\tx}(t)=k_0\,\rho\,\frac{x^{\alpha\rho}}{t}\,E\lc\frac{1_{\{\txz\leq t\}}}{{\lr t-\txz\rr}^\rho}\rc,\quad t>0.$$
\end{thm}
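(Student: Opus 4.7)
The plan is to identify $T_x$ with the candidate density by matching Mellin transforms, exploiting the moment relation in Proposition \ref{propm}. Write $g(t):=k_0\rho\,x^{\alpha\rho}\,t^{-1}\,E\!\left[(t-\txz)^{-\rho}\,1_{\{\txz\le t\}}\right]$ for $t>0$. Clearly $g\ge 0$. The goal is to compute $\int_0^\infty t^{-s}g(t)\,dt$ for $s$ in a vertical strip and show it equals $E[\tx^{-s}]$.

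First I would fix real $s\in(-\rho,1)$ (later extend to complex $s$ with $\re(s)\in(-\rho,1)$ by analytic continuation) and apply Fubini's theorem to write
\[
\int_0^\infty t^{-s}g(t)\,dt=k_0\rho\,x^{\alpha\rho}\,E\!\left[\int_{\txz}^\infty\frac{t^{-s-1}}{(t-\txz)^\rho}\,dt\right].
\]
The substitution $u=\txz/t$ transforms the inner integral into $(\txz)^{-s-\rho}\int_0^1 u^{s+\rho-1}(1-u)^{-\rho}\,du=(\txz)^{-s-\rho}\,B(s+\rho,1-\rho)$. Integrability is checked using $(t-\txz)^{-\rho}$ near $\txz$ (since $\rho<1$) and $t^{-s-\rho-1}$ at infinity (since $s+\rho>0$); the resulting expectation $E[(\txz)^{-s-\rho}]$ is finite for $s+\rho\in(0,1+\rho)$ by Proposition \ref{prop1} (or the discussion preceding Proposition \ref{propm}), which in particular justifies Fubini.

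Next I would substitute the outcome back and invoke Proposition \ref{propm} with $\beta=s+\rho$:
\[
\int_0^\infty t^{-s}g(t)\,dt = k_0\rho\,x^{\alpha\rho}\,B(s+\rho,1-\rho)\,E[(\txz)^{-(s+\rho)}] = E[\tx^{-s}].
\]
Setting $s=0$ gives $\int_0^\infty g(t)\,dt=1$, so $g$ is a probability density on $(0,\infty)$.

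To conclude, I would invoke the uniqueness of the Mellin transform: two probability measures on $(0,\infty)$ whose Mellin transforms agree on a nontrivial vertical strip coincide (this reduces via $t=e^u$ to uniqueness of the bilateral Laplace transform on a strip, hence to injectivity of the Fourier transform). Since the law of $\tx$ and the measure $g(t)\,dt$ have identical Mellin transforms on the strip $\re(s)\in(-\rho,1)$, they are equal, which simultaneously yields the absolute continuity of $\tx$ and the claimed formula for $f_{\tx}$. The only delicate point is the integrability bookkeeping needed to apply Fubini and to identify the strip of validity; once Proposition \ref{propm} and Proposition \ref{prop1} are in hand, everything else is a direct computation.
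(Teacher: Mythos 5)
Your proposal is correct and follows essentially the same route as the paper: both rest on the moment identity of Proposition \ref{propm}, the beta integral $\int_0^1 u^{s+\rho-1}(1-u)^{-\rho}\,du=B(s+\rho,1-\rho)$ via Fubini, and uniqueness of the Mellin transform on a vertical strip. The only difference is direction — the paper realizes the law of $\tx$ as that of $\beta_{\rho,1-\rho}^{-1}\,\txz$ under the measure tilted by $(\txz)^{-\rho}$ and then extracts the density, whereas you posit the density and verify its Mellin transform, which is the same computation run in reverse.
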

\begin{proof}
Recall that the moments of a beta random variable of parameters $a,b>0$ are given by
 $$E\lc\beta_{a,b}^p\rc=\frac{\Gamma(a+p)\,\Gamma(a+b)}{\Gamma(a+b+p)\,\Gamma(a)},\quad \re(p)>-a.$$
Then, the relation between the moments of $\tx$ and $\txz$ obtained in Proposition \ref{propm} can be expressed as
\begin{equation}\label{imtx}
E \lc  {\tx}^{-p}\rc = \frac{E\lc{\lr\frac{1}{\txz}\rr}^{\rho}\,{\lr\frac{\txz}{\beta_{\rho,1-\rho}}\rr}^{-p} \rc}{E \lc {\lr\frac{1}{\txz}\rr}^{\rho}\rc},\quad \re(p)\in(-\rho,1), 
\end{equation}
where $\beta_{\rho,1-\rho}$ is a beta random variable of parameters $\rho$ and $1-\rho$ independent of $\txz$. Denote $\mu_x$ the law of $\tx$ and $\nu_x$ the
probability measure on $[0,\infty)$ given by
$$\nu_x\left([s,t)\right)=\frac{E\lc{\lr\frac{1}{\txz}\rr}^{\rho}\,1_{\left\{\beta_{\rho,1-\rho}^{-1}\,\txz\,\in\,[s,t)\right\}} \rc}{E \lc {\lr\frac{1}{\txz}\rr}^{\rho}\rc},\quad 0\leq s< t.$$
The identity \eqref{imtx} says that the Mellin transforms of the measures $\mu_x$ and $\nu_x$ coincide on the strip 
$\{z\in\Cb: \re(z)\in(0,1+\rho)\}$, which implies that they are equal. In particular, we deduce that
\begin{equation}\label{idtx}
 P(\tx\leq t)=k_0\,\rho\, x^{\alpha\rho}\,\izt \frac{E\lc{\lr\frac{1}{\txz}\rr}^{\rho}\,1_{\left\{\txz\leq st\right\}}\rc}{ s^{1-\rho}{(1-s)}^{\rho}}\,ds.
\end{equation}
Note that using Fubini's theorem and the change of variable $u=\txz/s$, we get
\begin{align*}
\izt \frac{E\lc{\lr\frac{1}{\txz}\rr}^{\rho}\,1_{\left\{\txz\leq st\right\}}\rc}{ s^{1-\rho}{(1-s)}^{\rho}}\,ds&= E\lc{\lr\frac{1}{\txz}\rr}^{\rho}\,1_{\left\{\txz\leq t\right\}}\int\limits_{\txz/t}^{1} s^{\rho-1}{(1-s)}^{-\rho}\,ds\rc\\
&=E\lc 1_{\left\{\txz\leq t\right\}}\int\limits_{\txz}^{t} \frac{1}{u \,{\lr u-\txz\rr}^{\rho}}\,ds\rc.
\end{align*}
Applying Fubini's theorem to the term in the right-hand side and plugging the resulting expression in \eqref{idtx}, we obtain
$$P(\tx\leq t)=k_0\,\rho\, x^{\alpha\rho}\izt\frac{1}{u}\,E\lc\frac{1_{\{\txz\leq u\}}}{{\lr u-\txz\rr}^\rho}\rc\, du.$$
This concludes the proof.
 \end{proof}
\begin{rmq}
An alternative an shorter proof of the previous result follows from the expression of the Laplace transform of $\txz$ given by \eqref{eq1} and \eqref{eq2}. Indeed, from these identities, we deduce that
$$E\left[\tx \exp(-\lambda\tx)\right]=k_0\,\rho\, x^{\alpha\rho}\,\Gamma(1-\rho)\,\lambda^{\rho-1}\,E\left[ \exp(-\lambda\txz)\right].$$
Note that, the left-hand side is the Laplace transform of the measure $tP(\tx\in dt)$. Since $\Gamma(1-\rho)\lambda^{\rho-1}$ is the Laplace transform of $t^{-\rho}$, and the Laplace transform of a convolution is the product of the Laplace transforms, the right-hand side is the Laplace transform of the function
$$k_0\,\rho\, x^{\alpha\rho}\int\limits_{[0,t]}(t-s)^{-\rho}P(\txz\in dt)=k_0\,\rho\, x^{\alpha\rho}\,E\lc\frac{1_{\{\txz\leq t\}}}{{\lr t-\txz\rr}^\rho}\rc,$$
and the result follows.

The advantage of the proof with the Mellin transforms is that it also tell us that the law of $T_x$ under $P$ equals the law of $\beta_{\rho,1-\rho}^{-1}\txz$ under $P^{x,\rho}$, where $P^{x,\rho}$ is the probability measure defined by
$$P^{x,\rho}(A)=\frac{E\left[(\txz)^{-\rho}\,1_{A}\right]}{E\left[(\txz)^{-\rho}\right]}.$$
\end{rmq}


\section{\texorpdfstring{Properties and Representations of the Law of $\txz$}{}}\label{5}
By definition, the construction of the random variable $\txz$ uses only information of the joint law of $(\tx,K_x)$. In this section, we examine
the problem from the point of view of the process. This means that we study the asymptotic behaviour of the process $(X_s, 0\leq s\leq t)$ on $\{t<\tx\}$ given that $\{K_x\leq \varepsilon\}$ when $\varepsilon$ tends to $0+$. Using this approach, we provide a relation between the corresponding limit process and the dual process conditioned to die at $0$. After that, we use this relation to show that the law of $\txz$ is absolutely continuous with respect to the Lebesgue measure and we exhibit two expressions for its density function. The connections of our results with the existent literature are established in Section \ref{5.3}. In particular, the identities \ref{ci2} and \ref{ci} are deduced from the density representation provided in Theorem \ref{thmactx}. Finally, in the last section, we show an identity in law, similar to \eqref{is}, involving the random variable $\txz$. 
\subsection{\texorpdfstring{The Process Conditioned to Die at $0$}{}}
In this section, we recall the definition of the process conditioned to die at $0$ in the particular case of stable processes (see \cite{Chaum3}). For a more 
general and detailed treatment of this subject see \cite{Chaum2}. 

According to the results of \cite{Silve}, the excessive version of the potential density of the subordinator $(-X_{\dtau_\ell},\ell\geq 0)$ is 
harmonic for the process $(X,Q_x)$, $x>0$. Moreover, the function $h$ defined by
$$h(x)=E\lc\izi 1_{\{I_t\geq -x\}}d\dL_t\rc, \quad x\geq 0,$$
is invariant for this process and the potential density of  $(-X_{\dtau_\ell},\ell\geq 0)$ is $h'$. For each $x>0$, we define $P_x^\searrow$ as the law of the 
$h$-process associated with the semigroup $(q_t,t\geq 0)$  and the function $h'$. That is,
$$P_x^\searrow(\Lambda, t<\varsigma)=\frac{1}{h'(x)}E_x^Q\lc h'(X_t)\, 1_\Lambda\, 1_{\{t<\varsigma\}} \rc,\quad t\geq 0,\Lambda\in\Fs_t.$$
This law is introduced in  \cite{Chaum2}, where it is demonstrated that this process approaches $0$ at its lifetime. In other words, the process $(X,P_x^\searrow)$ 
corresponds to the process $(X,Q_x)$ conditioned to die at $0$. Following the same procedure, but replacing the process $X$ by its dual $\dX$, we can define the function $\hd$ with the analogous properties of $h$. Consequently, for each 
$x>0$, we define the law $\dP_x^\searrow$ as the $h$-process associated with the function $\dhp$ and the semigroup $(\dq_t,t\geq 0)$.

In the stable case, the functions $h$ and $\hd$ have the following form (see \cite{Chaum3}):
\begin{equation}\label{hdh}
 h(x)=c_0 \,x^{\alpha(1-\rho)}\quad\textrm{ and }\quad \hd(x)=\dc_0\, x^{\alpha\rho}\quad x>0,
\end{equation}
where $c_0$ and $\dc_0$ are positive constants. A direct proof of these identities is given in \cite[Proposition 2.3]{yayayor2}. Here, we provide an alternative proof which allows to compute the constants $c_0$ and $\dc_0$ explicitly.
\begin{lemme}[{\cite[p. 384]{Chaum3}}]
The functions $h$ and $\hd$ are given by
 $$h(x)=\Gamma(\rho)\,\dk_0\,x^{\alpha(1-\rho)}\quad\textrm{and}\quad \hd(x)=\Gamma(1-\rho)\,k_0\, x^{\alpha\rho}\quad x>0,$$
 where the constant $\dk_0$ is obtained by replacing $\rho$ by $1-\rho$ in the definition of $k_0$.
\end{lemme}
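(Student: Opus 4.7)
The plan is to recognize $h$ as the renewal function of the descending ladder height subordinator of $X$, and then to fix the precise normalizing constant.

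First, I time-change the integral defining $h(x)$ via $\dtau$, the right-continuous inverse of $\dL$. Because $\dL$ increases only on the closure of $\{t\geq 0: X_t=I_t\}$, on the support of $d\dL_t$ we have $-I_t=-X_{\dtau_{\dL_t}}$, and the substitution $t=\dtau_\ell$ turns the definition of $h$ into
\[
h(x)=E\lc\izi 1_{\{-X_{\dtau_\ell}\leq x\}}\,d\ell\rc=\izi P(H^*_\ell\leq x)\,d\ell,
\]
where $H^*_\ell:=-X_{\dtau_\ell}$ is the descending ladder height subordinator of $X$ already introduced in the paper. Hence $h$ coincides with the renewal function $U^*$ of $H^*$.

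Next, I invoke self-similarity. The scaling property of $X$, combined with the normalization $\dphi(q)=q^{1-\rho}$ (under which $\dtau$ is a stable subordinator of index $1-\rho$), forces $(H^*_{a\ell})_\ell\overset{(d)}{=}(a^{1/(\alpha(1-\rho))}H^*_\ell)_\ell$ as processes, for every $a>0$. Therefore $H^*$ is a stable subordinator of index $\alpha(1-\rho)$, its Laplace exponent has the form $\Phi^*(\eta)=\hat c_0\,\eta^{\alpha(1-\rho)}$ for some positive constant $\hat c_0$, and
\[
h(x)=U^*(x)=\frac{x^{\alpha(1-\rho)}}{\hat c_0\,\Gamma(1+\alpha(1-\rho))}.
\]

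It remains to identify $\hat c_0$ via a Wiener-Hopf / Tauberian argument. Writing $\kappa^*(q,\eta):=-\log E\lc\exp(-q\dtau_1-\eta H^*_1)\rc$, the Wiener-Hopf identity gives $E\lc\exp(\eta I_{T_q})\rc=\dphi(q)/\kappa^*(q,\eta)$, for $T_q$ an independent exponential time of rate $q$. Combining this with the scaling $I_t\overset{(d)}{=}t^{1/\alpha}I_1$ and the asymptotic $P(-I_1\leq x)\sim\dk_0\,x^{\alpha(1-\rho)}$ as $x\to 0+$ (which is \eqref{a1} applied to the dual process), a Tauberian argument on the $\eta\to\infty$ behavior of $E\lc\exp(\eta I_{T_q})\rc$ delivers $\kappa^*(q,\eta)\sim c_1^{1-\rho}\,\eta^{\alpha(1-\rho)}$ as $\eta\to\infty$, whence $\hat c_0=c_1^{1-\rho}$. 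Plugging in and using the very definition of $\dk_0$ rewrites the prefactor $1/(\hat c_0\,\Gamma(1+\alpha(1-\rho)))$ as $\Gamma(\rho)\,\dk_0$, producing the announced formula for $h$; the identity for $\hd$ follows by running the same argument with $X$ replaced by its dual. The decisive step is precisely this last identification: self-similarity alone locks $h$ into a power law $C\,x^{\alpha(1-\rho)}$, and pinning the prefactor requires additional input, either the explicit Wiener-Hopf factorization of $\psi$ or, as above, the asymptotic \eqref{a1} for the supremum of the dual process together with a Tauberian argument.
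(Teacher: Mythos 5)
Your proof is correct, but it takes a genuinely different route from the paper's. The paper works with the ascending side and the $q$-killed quantity $\hd^q(x)=\int_0^\infty E[e^{-q\tau_\ell}1_{\{S_{\tau_\ell}\leq x\}}]\,d\ell$, invokes the fluctuation identity $q\int_0^\infty e^{-qt}P(\tx>t)\,dt=\phi(q)\,\hd^q(x)$ from the proof of Bertoin's Theorem VI.18, and then gets the constant in one stroke by scaling, the asymptotic \eqref{a1} and dominated convergence as $q\to 0+$; no Wiener--Hopf factorisation or Tauberian theorem is needed beyond that. You instead identify $h$ with the renewal function of the descending ladder height subordinator $H^*_\ell=-X_{\dtau_\ell}$, use self-similarity (together with the normalisation $\dphi(q)=q^{1-\rho}$) to force $H^*$ to be stable of index $\alpha(1-\rho)$, and then pin the Laplace-exponent constant via the bivariate Wiener--Hopf identity and an Abelian argument based on \eqref{a1} for the dual process. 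Both arguments ultimately rest on the same two inputs --- a fluctuation identity and the small-ball asymptotic \eqref{a1} --- but yours is somewhat longer and yields as a by-product the explicit Laplace exponent $\Phi^*(\eta)=c_1^{1-\rho}\eta^{\alpha(1-\rho)}$ of the ladder height process, which is slightly more information than the lemma asks for; the numerology checks out, since $\dk_0=1/\bigl(c_1^{1-\rho}\Gamma(\rho)\Gamma(1+\alpha(1-\rho))\bigr)$ turns your prefactor $1/\bigl(c_1^{1-\rho}\Gamma(1+\alpha(1-\rho))\bigr)$ into $\Gamma(\rho)\,\dk_0$ as claimed. Two steps deserve a word more of justification than you give them: the scaling relation for $(H^*_{a\ell})_\ell$ depends on how the local time $\dL$ transforms under the space-time scaling of $X$ (this is where the normalisation of $\dL$ enters), and the passage from $P(-I_1\leq x)\sim\dk_0\,x^{\alpha(1-\rho)}$ to the asymptotics of $E[\exp(\eta I_{T_q})]$ requires a (routine) domination argument to interchange the limit $\eta\to\infty$ with the exponential time average. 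Neither is a gap, but both are exactly the kind of step the paper's more direct computation avoids.
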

\begin{proof}
 We give the proof for $\hd$. The proof for $h$ follows the same lines.
 
For $q>0$, we consider the function
 $$\hd^q(x)= \izi E\lc e^{-q\tau_\ell}\,1_{\{S_{\tau_\ell}\leq x\}}\rc \,d\ell,\quad x>0.$$
Note that $\hd^q(x)$ converges to $\hd(x)$ when $q$ tends to $0$. 
On the other hand, it is possible to prove that (see the proof of \cite[Theorem 18, Chap. VI]{Ber})
$$q\izi e^{-qt} P(\tx>t)dt=\phi(q)\,\hd^q(x)=q^\rho\,\hd^q(x),$$
and then, making the change of variable $u=qt$ and using \eqref{is}, we can express this identity as
$$\izi e^{-u}P\lr S_1<x{\lr \frac{q}{u}\rr}^{1/\alpha}\rr du= q^\rho\,\hd^q(x).$$
Dividing this by $q^\rho$, taking the limit when $q$ tends to $0+$ and using \eqref{a1}, we obtain the desired result.
\end{proof}

\subsection{\texorpdfstring{A Relation Between $\txz$ and the Process $(X,\dP_x^{\searrow})$}{}}\label{5.1}
We start with a technical lemma, which will be useful in order to study the limit when $\varepsilon$ tends to $0+$ of the process $(X_s, 0\leq s\leq t)$ on $\{t<\tx\}$ given that $\{K_x\leq \varepsilon\}$.
\begin{lemme}\label{exc}
For all $x,t>0$ we have
$$E\lc1_{\{\tx>t\}}{\lr x-X_t\rr}^{\alpha\rho-1}\rc\leq x^{\alpha\rho-1}.$$
\end{lemme}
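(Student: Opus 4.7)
My plan is to rewrite the left-hand side as an expectation under the killed dual measure $\dQ_x$ via a simple change of variable, and then invoke the excessivity of the function $\dhp$ underlying the $h$-transform $\dP_x^\searrow$ introduced in the previous subsection.

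\emph{Step 1 (Duality).} I would set $Y_s := x - X_s$, $s\geq 0$. Since $(-X_s)_{s\geq 0}$ has law $\dP$ under $P$ by definition of $\dP$, and $Y_0 = x$, the process $(Y_s)$ under $P$ is distributed as the canonical process under $\dP_x$. Under this identification,
$$
\{\tx > t\} = \{X_s < x,\ \forall s \in [0,t]\} \ \longleftrightarrow\ \{X_s > 0,\ \forall s \in [0,t]\},
$$
and the latter coincides $\dP_x$-a.s. with $\{t < T_{(-\infty,0)}\}$, thanks to the regularity of $0$ for $(-\infty,0)$ for the dual process (so $X$ cannot touch $0$ under $\dP_x$ without immediately entering the negative half-line). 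Hence, by the very definition of $\dQ_x$,
$$
E\lc 1_{\{\tx > t\}}(x - X_t)^{\alpha\rho - 1}\rc \;=\; \dE_x\lc 1_{\{t < T_{(-\infty,0)}\}}\, X_t^{\alpha\rho - 1}\rc \;=\; \int_0^\infty y^{\alpha\rho - 1}\,\dq_t(x,y)\,dy.
$$

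\emph{Step 2 (Excessivity of $\dhp$).} By the previous lemma, $\hd(y) = \Gamma(1-\rho)k_0 y^{\alpha\rho}$, so $\dhp(y) = \Gamma(1-\rho)k_0\alpha\rho\, y^{\alpha\rho-1}$. The $h$-transform construction of $\dP_x^\searrow$ from the semigroup $(\dq_t)$ and the function $\dhp$ requires precisely that $\dhp$ be excessive for $(\dq_t)$, so that
$$
\int_0^\infty \dhp(y)\,\dq_t(x,y)\,dy \;\leq\; \dhp(x).
$$
Dividing through by the positive constant $\Gamma(1-\rho)k_0\alpha\rho$ yields $\int_0^\infty y^{\alpha\rho - 1}\dq_t(x,y)dy \leq x^{\alpha\rho - 1}$; combining with Step 1 concludes the proof.

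\textbf{Anticipated obstacle.} Both steps are essentially bookkeeping from the preliminaries. The main conceptual input is identifying $\dhp$ as an excessive function for the killed dual semigroup, which is already implicit in the definition of $\dP_x^\searrow$; the argument requires no explicit computation with $\dq_t$. The only minor technical point is the event identification in Step 1, which is painless thanks to regularity.
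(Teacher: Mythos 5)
Your proof is correct and follows essentially the same route as the paper: rewrite the expectation under the killed dual semigroup $(\dq_t)$ via the map $X\mapsto x-X$, then invoke the $(\dq_t)$-excessivity of $\dhp$ (which is proportional to $y^{\alpha\rho-1}$) to bound the resulting integral by $\dhp(x)$ up to the normalizing constant. The only differences are cosmetic: you spell out the boundary identification via regularity and carry the constant $\Gamma(1-\rho)k_0\alpha\rho$ explicitly, neither of which changes the argument.
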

\begin{proof}
Note that
\begin{align*}
 E&\lc1_{\{\tx>t\}}\,{\lr x-X_t\rr}^{\alpha\rho-1}\rc= \dE\lc 1_{\{T_{(-\infty,-x)}>t\}}\,{\lr x+X_t\rr}^{\alpha\rho-1}\rc\\
 &=\dE_x\lc 1_{\{T_{(-\infty,0)}>t\}}\,{X_t}^{\alpha\rho-1}\rc=\frac{1}{\dc}\,E_x^{\dQ}\lc 1_{\{\varsigma>t\}}\,\dhp(X_t)\rc.
\end{align*}
The result follows from the fact that the function $\dhp$ is $(\dq_t)$-excessive (see \cite{Chaum2} and \cite{Silve}).
\end{proof}
Now, we can state the main theorem of this section.
\begin{thm}\label{thmcr}
 For all $x,t>0$ and $\Lambda\in\Fs_t$, we have
 $$\lim_{\varepsilon\rightarrow 0+}P\lr\Lambda,\, \tx>t\,\arrowvert\, K_x\leq \varepsilon\rr= \dP_x^{\searrow}(\Lambda_x,\,\zeta>t),$$
 where $\Lambda_x=\{w:x-w\in \Lambda\}$.
\end{thm}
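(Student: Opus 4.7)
The plan is to condition on $\Fs_t$ via the Markov property, reduce the small-$\varepsilon$ behaviour of the conditional overshoot probability to the known asymptotic \eqref{adkx}, and then recognise the pointwise limit as the $h$-transform defining $\dP_x^\searrow$.

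First I would fix $\Lambda\in\Fs_t$ and apply the simple Markov property at time $t$. On $\{t<\tx\}$ the path stays strictly below $x$, so translating and using the spatial homogeneity of $X$,
$$P(\Lambda,\tx>t,K_x\leq\varepsilon)=E\bigl[1_\Lambda\,1_{\{t<\tx\}}\,P(K_{x-X_t}\leq\varepsilon)\bigr].$$
By the scaling property $K_y\overset{(d)}{=}yK_1$, so with $F(r):=P(K_1\leq r)$ one has $P(K_{x-X_t}\leq\varepsilon)=F(\varepsilon/(x-X_t))$ and $P(K_x\leq\varepsilon)=F(\varepsilon/x)$. The asymptotic \eqref{adkx} gives $F(r)\sim c_\rho\, r^{1-\alpha\rho}$ as $r\to 0+$, hence pointwise on $\{t<\tx\}$,
$$\frac{F(\varepsilon/(x-X_t))}{F(\varepsilon/x)}\xrightarrow[\varepsilon\to 0+]{}x^{1-\alpha\rho}(x-X_t)^{\alpha\rho-1}.$$

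To interchange limit and expectation I would invoke dominated convergence. Since $F(r)\leq Cr^{1-\alpha\rho}$ uniformly in $r>0$ (near $0$ by \eqref{adkx}; for large $r$ because $F\leq 1$ while $r^{1-\alpha\rho}\to\infty$) and $F(\varepsilon/x)\geq c'\varepsilon^{1-\alpha\rho}x^{\alpha\rho-1}$ for $\varepsilon$ small, the ratio is dominated by $C_x(x-X_t)^{\alpha\rho-1}$, which is integrable on $\{t<\tx\}$ by Lemma \ref{exc}. Therefore
$$\lim_{\varepsilon\to 0+}P(\Lambda,\tx>t\mid K_x\leq\varepsilon)=x^{1-\alpha\rho}E\bigl[1_\Lambda\,1_{\{t<\tx\}}\,(x-X_t)^{\alpha\rho-1}\bigr].$$
To identify this with $\dP_x^\searrow(\Lambda_x,\zeta>t)$, I would unfold the $h$-transform:
$$\dP_x^\searrow(\Lambda_x,\zeta>t)=\frac{1}{\dhp(x)}\,E_x^{\dQ}\bigl[\dhp(X_t)\,1_{\Lambda_x}\,1_{\{t<\varsigma\}}\bigr].$$
Under $\dQ_x$ the coordinate process has the law of $x-X$ (with $X\sim P$) killed at $\tx$, and the definition $\Lambda_x=\{w:x-w\in\Lambda\}$ turns $\{w\in\Lambda_x\}$ into $\{X\in\Lambda\}$ on the $P$-side. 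Inserting $\dhp(y)=\Gamma(1-\rho)k_0\alpha\rho\,y^{\alpha\rho-1}$ from the preceding lemma, the normalising constants cancel and the two expressions coincide.

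The main technical obstacle is the uniform domination in Step 3: one must control $F(\varepsilon/(x-X_t))/F(\varepsilon/x)$ in $\varepsilon$ both where $x-X_t$ is close to $0$ (so that $\varepsilon/(x-X_t)$ is not itself small) and where $x-X_t$ is large, then pair the resulting bound with the integrability supplied by Lemma \ref{exc}. The remaining ingredients—the Markov reduction, the pointwise asymptotic via \eqref{adkx} together with the scaling $K_y\overset{(d)}{=}yK_1$, and the bookkeeping that converts the $\dQ_x$-side into a $P$-side expectation via the reversal map $w\mapsto x-w$—are routine.
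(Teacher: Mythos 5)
Your proposal is correct and follows essentially the same route as the paper: Markov property at time $t$, the pointwise limit of the ratio of overshoot probabilities via \eqref{adkx}, dominated convergence with integrability supplied by Lemma \ref{exc}, and identification of the limit with the $h$-transform defining $\dP_x^{\searrow}$. The only difference is the domination step, where the paper derives the explicit bound \eqref{dl1} from the exact integral formula for $P(K_y\leq\varepsilon)$ coming from \eqref{lxtx}, while you obtain the same dominating function $C_x\,(x-X_t)^{\alpha\rho-1}$ more cheaply from the asymptotic \eqref{adkx} together with $F\leq 1$ (valid since $1-\alpha\rho>0$); both arguments work.
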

\begin{proof}
 Fix $x,t>0$ and $\Lambda\in\Fs_t$. Note first that
 $$P\lr\Lambda,\, \tx>t\,\arrowvert\, K_x\leq \varepsilon\rr=E\lc 1_{\Lambda}\, 1_{\{\tx>t\}}\, \frac{E[K_x\leq \varepsilon\,\arrowvert\, \Fs_t]}{P\lr K_x\leq \varepsilon\rr}\rc.$$
 Thus, using the Markov property, we obtain
 \begin{equation}\label{iamp}
 P\lr\Lambda,\, \tx>t\,\arrowvert\, K_x\leq \varepsilon\rr=E\lc 1_{\Lambda}\, 1_{\{\tx>t\}}\, \frac{\psi(x-X_t;\varepsilon)}{\psi(x;\varepsilon)}\rc,
 \end{equation}
 where $\psi(y;\varepsilon)=P\lr K_y\leq \varepsilon\rr$. 
 
Applying \eqref{adkx}, we get that, for all $y,z>0$,
\begin{equation}\label{cas}
 \frac{\psi(y;\varepsilon)}{\psi(z;\varepsilon)}\xrightarrow[\varepsilon\rightarrow 0+]{}{\lr\frac{y}{z}\rr}^{\alpha\rho-1}.
\end{equation}
Moreover, using the identity in law \eqref{lxtx}, it follows that for all $y>0$,
$$\psi(y;\varepsilon)=\frac{\sin(\pi\alpha\rho)}{\pi}\;\;y^{\alpha\rho}\varepsilon^{1-\alpha\rho}\id{0}{1}\frac{1}{(u\varepsilon+y)u^{\alpha\rho}}du.$$
The previous expression allows us to prove that, for every $y>0$ and $\varepsilon\leq z$,
\begin{equation}\label{dl1}
 \frac{\psi(y;\varepsilon)}{\psi(z;\varepsilon)}\leq{\lr\frac{y}{z}\rr}^{\alpha\rho}\frac{(z+\varepsilon)}{y}\leq 2\,{\lr\frac{y}{z}\rr}^{\alpha\rho-1}.
\end{equation}
Thanks to \eqref{cas}, \eqref{dl1} and Lemma \ref{exc}, we can take the limit in \eqref{iamp} when $\varepsilon$ tends to $0+$ and apply the dominated convergence theorem to deduce that
$$\lim_{\varepsilon\rightarrow 0+}P\lr\Lambda,\, \tx>t\,\arrowvert\, K_x\leq \varepsilon\rr=E\lc 1_{\Lambda}\, 1_{\{\tx>t\}}\, {\lr\frac{x-X_t}{x}\rr}^{\alpha\rho-1}\rc.$$
Note that the right-hand side can be written as
\begin{align*}
 E&\lc 1_{\Lambda}\, 1_{\{\tx>t\}}\, {\lr\frac{x-X_t}{x}\rr}^{\alpha\rho-1}\rc=\dE\lc 1_{\Lambda_0}\, 1_{\{T_{(-\infty,-x)}>t\}}\, {\lr\frac{x+X_t}{x}\rr}^{\alpha\rho-1}\rc\\
&=\dE_x\lc 1_{\Lambda_x}\, 1_{\{T_{(-\infty,0)}>t\}}\, {\lr\frac{X_t}{x}\rr}^{\alpha\rho-1}\rc=E_x^{\dQ}\lc 1_{\Lambda_x}\, 1_{\{\varsigma>t\}}\, {\lr\frac{X_t}{x}\rr}^{\alpha\rho-1}\rc.
\end{align*}
The result follows from the definition of $\dP_x^{\searrow}$.
\end{proof}
\begin{rmq}
The statement of the previous theorem can be equivalently rewritten as follows:
$$\lim_{\varepsilon\rightarrow 0+}\dP_x\lr\Lambda,\, T_{(-\infty,0)}>t\,\arrowvert\, X_{T_{(-\infty,0)}}\geq -\varepsilon\rr= \dP_x^{\searrow}(\Lambda,\,\zeta>t).$$
This result is very close to Proposition 3 in \cite{Chaum2} which states that for every $\beta>0$, we have
$$\lim_{\varepsilon\rightarrow 0+}\dP_x\lr\Lambda,\, T_{(-\infty,\beta)}>t\,\arrowvert\,I_{T_{(-\infty,0)}-}\leq \varepsilon\rr= \dP_x^{\searrow}(\Lambda,\,T_{(0,\beta)}>t),$$
Moreover, in the stable case, we can relax the condition $\beta>0$ in the following way. First, note that
$$\dP_x\lr\Lambda,\, T_{(-\infty,\varepsilon)}>t\,\arrowvert\,I_{T_{(-\infty,0)}-}\leq \varepsilon\rr=P\left(\Lambda_x, T_{x-\varepsilon}>t\arrowvert\,K_{x-\varepsilon}\leq \varepsilon\right).$$
Thus, following the lines of the proof of Theorem \ref{thmcr}, we can show that
$$\lim_{\varepsilon\rightarrow 0+}\dP_x\lr\Lambda,\, T_{(-\infty,\varepsilon)}>t\,\arrowvert\,I_{T_{(-\infty,0)}-}\leq \varepsilon\rr= \dP_x^{\searrow}(\Lambda,\,\zeta>t).$$
The main difference between the two results lies in the conditioning part. In the latter result, the process $(X,\dP_x)$ is conditioned to reach continuously the level $0$, while in our result the process $(X,\dP_x)$ is conditioned to leave continuously the interval $[0,\infty)$.
As a consequence of the two results, we can see the random variable $\txz$ as a first passage time above the level $x$, when the process $X$ is conditioned 
to cross the level $x$ continuously.
\end{rmq}
\begin{rmq}
Following the discussion of the previous remark, we see that Proposition 3 in \cite{Chaum2}, which holds in a more general framework, can be expressed in terms of overshoots as follows:
$$\lim_{\varepsilon\rightarrow 0+} P\left(\Lambda, T_{x-\beta}>t\arrowvert\,K_{x-\varepsilon}\leq \varepsilon\right)=\dP_x^{\searrow}(\Lambda_x,\,T_{(0,\beta)}>t),\quad x>\beta>0.$$

\end{rmq}

\begin{cor}\label{coril}
For all $x>0$, the law of $\txz$ is equal to the law of the lifetime $\zeta$ of $X$ under $\dP_x^{\searrow}$.
\end{cor}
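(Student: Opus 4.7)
The plan is to read Corollary \ref{coril} as the immediate specialization of Theorem \ref{thmcr} to the trivial event $\Lambda=\Ds$, and then reconcile the resulting identity with the definition of $\txz$ as the distributional limit of $\txe$.

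First I would take $\Lambda=\Ds$ in the conclusion of Theorem \ref{thmcr}. On the left hand side this reduces the conditional probability $P(\Lambda,\tx>t\mid K_x\leq\varepsilon)$ to $P(\tx>t\mid K_x\leq\varepsilon)$, which by the definition of $\txe$ in Section 2.3 is exactly $P(\txe>t)$. On the right hand side one has $\Lambda_x=\{w:x-w\in\Ds\}=\Ds$, so $\dP_x^{\searrow}(\Lambda_x,\zeta>t)$ reduces to $\dP_x^{\searrow}(\zeta>t)$. Thus Theorem \ref{thmcr} yields
$$\lim_{\varepsilon\to 0+}P(\txe>t)=\dP_x^{\searrow}(\zeta>t),\qquad t>0.$$

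Next I would invoke the fact, recalled in Section 2.3, that $\txe$ converges in law to $\txz$ as $\varepsilon\to 0+$. By the portmanteau theorem, the limit of $P(\txe>t)$ equals $P(\txz>t)$ at every continuity point $t$ of the distribution function of $\txz$. Combining with the display above gives $P(\txz>t)=\dP_x^{\searrow}(\zeta>t)$ at all such $t$, which suffices to identify the two one-dimensional laws since distribution functions coincide as soon as they agree on a dense set.

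There is essentially no obstacle here beyond bookkeeping; the content lives entirely in Theorem \ref{thmcr}. The only small point worth mentioning is the passage from convergence at continuity points to equality of laws, which is a standard consequence of right-continuity of distribution functions, and the verification that the set $\Lambda_x$ appearing in Theorem \ref{thmcr} does reduce to $\Ds$ when $\Lambda=\Ds$, which follows directly from its definition $\Lambda_x=\{w:x-w\in\Lambda\}$.
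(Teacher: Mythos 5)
Your proposal is correct and matches the paper's proof, which simply states that the corollary is direct from Theorem \ref{thmcr}; you have merely spelled out the routine details (taking $\Lambda=\Ds$ and invoking the convergence in law of $\txe$ to $\txz$). No issues.
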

\begin{proof}
Direct from Theorem \ref{thmcr}.
\end{proof}
\subsection{A Few Words on the Lamperti Transformation}\label{lt}
The process $(X,\dP_x^{\searrow})$ is a positive self-similar Makov process (PSSMP) of index $1/\alpha$. The underlying Lévy process $(\xi^{\searrow},\dPb)$ in the Lamperti representation is characterized in \cite[Corollary 3]{Cachau}. From the Lamperti  transformation and Corollary \ref{coril} we have that
\begin{equation}\label{lamp}
 (\txz,P)\overset{(d)}{=}(\zeta,\dP_x^{\searrow})\overset{(d)}{=}\left(x^\alpha\izi e^{\alpha\xi_s^{\searrow}}ds,\dPb\right).
\end{equation}
When the process $(X,P)$ is spectrally positive, i.e. $\alpha(1-\rho)=1$, the process $(\xi^{\searrow},\dPb)$ is spectrally negative (see \cite{Cachau}). Consequently, the results in \cite{Pat} can be applied to deduce that the law of $\txz$ is absolutely continuous and its density function $f_{\txz}$ infinitely continuously differentiable. Moreover, power series and contour integral representations for the latter density are in \cite{Pat} provided.

In the general case, we can deduce from \cite[Section 5.3]{Kupa} that the process $(\xi^{\searrow},\dPb)$ is a hypergeometric Lévy process with parameters $(0,\alpha(1-\rho),0,\alpha\rho)$. Therefore, we have that
\begin{equation}\label{lamp1}
\txz\overset{(d)}{=}x^\alpha\izi e^{-\alpha\xi_s}ds,
\end{equation}
where $\xi$ is a hypergeometric process with parameters $(1,\alpha\rho,1,\alpha(1-\rho))$. Let $\Ls$ denote the set of irrational numbers $x$, for which there exists a constant $b>1$ such that the inequality $|x-\frac{p}{q}|<b^{-q}$ is satisfied for infinitely many integers $p$ and $q$. According to \eqref{lamp1} and the results in \cite{Kupa}, if $\alpha\notin\Ls\cup\Qb$ we have that the law of $\txz$ is absolutely continuous and its density function admits the following convergent series expansion:
$$f_{\txz}(t)=x^{-\alpha(1+\rho)}t^{\rho}\sum\limits_{n=0}^{\infty}b_{m,n}\,\left(\frac{t}{x^\alpha}\right)^{m/\alpha+n},\quad\textrm{if }\alpha<1,$$
and
$$f_{\txz}(t)=x t^{-1-1/\alpha}\sum\limits_{m=0}^{\infty}\sum\limits_{n=0}^{\infty}c_{m,n}\,\left(\frac{t}{x^\alpha}\right)^{-m/\alpha-n},\quad\textrm{if }\alpha>1,$$
where the coefficients $\{b_{n,m}\}_{m,n\geq 0}$ and $\{c_{n,m}\}_{m,n\geq 0}$ are defined in \cite[pp. 125-126]{Kupa} using the notations $\beta=\widehat{\beta}=1$, $\gamma=\alpha\rho$ and $\widehat{\gamma}=\alpha(1-\rho)$.

\subsection{\texorpdfstring{Density Representation of the Law of $\txz$}{}}\label{5.2}
The following proposition states the absolute continuity of the law of $\txz$ and provides two representations for its density function.
\begin{prop}\label{actxo}
For all $x>0$, the law of $\txz$ is absolutely continuous and its density function $f_{\txz}$ is given by
\begin{equation}\label{ftxz1}
f_{\txz}(t)=\frac{1}{\alpha\rho\, k_0\,\Gamma(1-\rho)}\,x^{1-\alpha\rho}\,\dr_t(x), \quad t>0,
\end{equation}
or equivalently, by
\begin{equation}\label{ftxz2}
f_{\txz}(t)=\frac{\sin(\pi\rho)}{\pi\alpha\rho\, k_0}\,x^{1-\alpha\rho}\,t^{\rho-1}\,\dpmt_t(x), \quad t>0.
\end{equation}
\end{prop}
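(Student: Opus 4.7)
The plan is to start from the Laplace transform representation \eqref{eq1},
$$E[\exp(-\lambda\txz)]=\frac{x^{1-\alpha\rho}}{k_0\,\Gamma(1-\rho)\,\alpha\rho}\,\lambda^{-\rho}\,f_\lambda(x),$$
and to identify the right-hand side as the Laplace transform in $t$ of a multiple of $\dr_t(x)$. More precisely, once I prove the auxiliary identity
$$\int_0^\infty e^{-\lambda t}\,\dr_t(x)\,dt = \lambda^{-\rho}\,f_\lambda(x),\qquad \lambda>0,\;x>0,\quad(\star)$$
uniqueness of the Laplace transform will immediately yield the absolute continuity of the law of $\txz$ together with formula \eqref{ftxz1}. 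The equivalent representation \eqref{ftxz2} will then follow from \eqref{frtpm} combined with the reflection identity $\Gamma(\rho)\Gamma(1-\rho)=\pi/\sin(\pi\rho)$.

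To prove $(\star)$ I would apply It\^o's excursion theory to the reflected process $\dR=X-I$. Recall that the inverse local time $\dtau$ has Laplace exponent $\dphi(q)=q^{1-\rho}$ and that the entrance law of the excursion measure $\dn$ has density $\dr_t$. The standard compensation formula for the excursion point process then yields
$$E\left[\int_0^\infty e^{-\lambda t}\,1_{\{\dR_t\in dx\}}\,dt\right]=\dphi(\lambda)^{-1}\int_0^\infty e^{-\lambda t}\,\dr_t(x)\,dt\,dx=\lambda^{\rho-1}\int_0^\infty e^{-\lambda t}\,\dr_t(x)\,dt\,dx.$$
On the other hand, the left-hand side equals $\lambda^{-1}$ times the density at $x$ of $\dR_{\ef_\lambda}=X_{\ef_\lambda}-I_{\ef_\lambda}$. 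To identify this density with $f_\lambda$ I would invoke the classical time-reversal of a L\'evy process evaluated at an independent exponential time, which gives $(X_{\ef_\lambda-s}-X_{\ef_\lambda})_{0\le s\le\ef_\lambda}\overset{(d)}{=}(\dX_s)_{0\le s\le\ef_\lambda}$; applied to the dual process, this yields $X_{\ef_\lambda}-I_{\ef_\lambda}\overset{(d)}{=}S_{\ef_\lambda}$, whose density is precisely $f_\lambda$. Combining the two expressions then produces $(\star)$.

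The main technical point is the excursion-theoretic computation of the $\lambda$-resolvent of $\dR$ in terms of $\dr_t$: one has to carefully apply the compensation formula for the Poisson point process of excursions and justify the passage from an identity between finite measures on $(0,\infty)$ to a pointwise identity between densities (using continuity in $x$). The time-reversal step, although classical, also deserves a precise statement at the exponential stopping time. Once these ingredients are in place, the remainder of the argument is routine: Laplace uniqueness yields \eqref{ftxz1}, while \eqref{ftxz2} follows from \eqref{frtpm} after a straightforward rewriting of the Gamma factors.
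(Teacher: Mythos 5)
Your proposal is correct, but it takes a genuinely different route from the paper. The paper's proof does not touch the Laplace transform \eqref{eq1} at all: it starts from Corollary \ref{coril} (the law of $\txz$ is that of the lifetime under $\dP_x^{\searrow}$), writes $P(\txz>t)=E_x^{\dQ}\bigl[\dhp(X_t)\,1_{\{\varsigma>t\}}\bigr]/\dhp(x)$, invokes Silverstein's identity $\dhp(x)=\int_0^\infty \dr_s(x)\,ds$, and then uses the fact that the entrance law of $\dn$ flows under the semigroup $(\dq_t)$, namely $E_x^{\dQ}[\dr_s(X_t)1_{\{\varsigma>t\}}]=\dr_{t+s}(x)$, to get the closed tail formula $P(\txz>t)=\dhp(x)^{-1}\int_t^\infty\dr_u(x)\,du$ directly. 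Your argument instead bypasses Theorem \ref{thmcr} and Corollary \ref{coril} entirely and rests on \eqref{eq1} together with the identity $\int_0^\infty e^{-\lambda t}\dr_t(x)\,dt=\lambda^{-\rho}f_\lambda(x)$, which you obtain from the excursion decomposition of the resolvent of $\dR$ and the duality $X_{\ef_\lambda}-I_{\ef_\lambda}\overset{(d)}{=}\sel$; this is essentially the classical fluctuation-theoretic expression for the law of the supremum at an exponential time, and it is indeed consistent with the paper's normalization $\dphi(q)=q^{1-\rho}$ (the inverse local time is a driftless stable subordinator, so the zero set of $\dR$ is Lebesgue-null and no drift term appears in the compensation formula). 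What each approach buys: the paper's route is more structural --- it exhibits the tail of $\txz$ as a normalized potential of the excursion entrance law and ties the result to the conditioned process of Section 5.1 --- whereas yours is self-contained modulo the preliminaries and needs only classical excursion/duality identities. The two technical points you flag are the right ones; in particular, the compensation formula only gives your identity $(\star)$ for Lebesgue-a.e.\ $x$, so to assert \eqref{ftxz1} for the fixed $x$ in question you do need to fix continuous versions of $x\mapsto\dr_t(x)$ and $f_\lambda$, and one minor slip in wording: the duality lemma applied to $X$ itself (not to the dual process) already gives $X_t-I_t\overset{(d)}{=}S_t$ for fixed $t$, hence at $\ef_\lambda$.
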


\begin{proof}
By Corollary \ref{coril} and the definition of $\dP_x^{\searrow}$, we have that
\begin{equation}\label{c4e1}
 P(\txz>t)=\dP_x^{\searrow}(\varsigma>t)=E_x^{\dQ}\lc\frac{\dhp(X_t)}{\dhp(x)}\,1_{\{\varsigma>t\}}\rc.
\end{equation}
On the other hand, using an expression of the potential under $\dn$ due to Silverstein (\cite[Eq. (3.3)]{Silve}), we have
$$\dhp(x)=\izi \dr_s(x)\,ds.$$
Plugging this identity in \eqref{c4e1} and applying Fubini's theorem, we obtain
$$P(\txz>t)=\frac{1}{\dhp(x)}\izi E_x^{\dQ}\lc \dr_s(X_t)\,1_{\{\varsigma>t\}}\rc\,ds.$$
Note that
$$E_x^{\dQ}\lc \dr_s(X_t)\,1_{\{\varsigma>t\}}\rc=\izi \dr_s(y)\, \dq_t(x,y)\,dy=\izi \dr_s(y)\, q_t(y,x)\,dy=\dr_{t+s}(x).$$
It follows that
$$P(\txz>t)=\frac{1}{\dhp(x)}\int_t^{\infty}\dr_u(x)\,du.$$
The absolute continuity of the law of $\txz$, as well as the identity \eqref{ftxz1}, is established. The identity \eqref{ftxz2} follows from \eqref{ftxz1} by using \eqref{frtpm}. 
\end{proof}

\subsection{Connections with the Existent Literature}\label{5.3}
As a corollary of the previous results, we obtain two representations for the density function $f_{\tx}$, which, by means of \eqref{isd}, are equivalent to \eqref{ci} and \eqref{ci2}. 
\begin{cor}[{\cite[Corollary 4]{Chaum4}} and {\cite[Lemma 8]{Dosa}}]
For all $x>0$, the density function of $\tx$ admits the following representations:
$$f_{\tx}(t)=\frac{1}{\alpha\,\Gamma(1-\rho)}\, \frac{x}{t}\,\int\limits_0^t \frac{1}{{(t-s)}^\rho}\,\dr_s(x)\,ds,\quad t>0,$$
and 
$$f_{\tx}(t)=\frac{\sin(\pi\rho)}{\alpha\pi}\, \frac{x}{t}\,\int\limits_0^t \frac{\dpms_s(x)}{s^{1-\rho}\,(t-s)^\rho}ds,\quad t>0.$$
\end{cor}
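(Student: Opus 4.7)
The plan is a direct substitution, combining Theorem \ref{thmactx} with the two density formulas for $\txz$ in Proposition \ref{actxo}. By Theorem \ref{thmactx},
$$f_{\tx}(t)=k_0\,\rho\,\frac{x^{\alpha\rho}}{t}\,E\lc\frac{1_{\{\txz\leq t\}}}{(t-\txz)^\rho}\rc,$$
and since Proposition \ref{actxo} asserts that $\txz$ has a density, the first step is simply to rewrite the expectation as the Lebesgue integral
$$E\lc\frac{1_{\{\txz\leq t\}}}{(t-\txz)^\rho}\rc=\int_0^t\frac{f_{\txz}(s)}{(t-s)^\rho}\,ds.$$

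Plugging in the first density formula $f_{\txz}(s)=\frac{x^{1-\alpha\rho}}{\alpha\rho\,k_0\,\Gamma(1-\rho)}\,\dr_s(x)$ from \eqref{ftxz1}, the constants collapse as $k_0\,\rho\cdot\frac{1}{\alpha\rho\,k_0\,\Gamma(1-\rho)}=\frac{1}{\alpha\,\Gamma(1-\rho)}$ and the powers of $x$ as $x^{\alpha\rho}\cdot x^{1-\alpha\rho}=x$, producing exactly the first claimed representation. Substituting instead the meander version $f_{\txz}(s)=\frac{\sin(\pi\rho)}{\pi\alpha\rho\,k_0}\,x^{1-\alpha\rho}\,s^{\rho-1}\,\dpms_s(x)$ from \eqref{ftxz2} inserts an extra factor $s^{\rho-1}$ into the integrand; the constants recombine to $\frac{\sin(\pi\rho)}{\alpha\pi}$ and one obtains the second formula. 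Alternatively, the identity \eqref{frtpm}, $\dr_s(x)=\frac{1}{\Gamma(\rho)}s^{\rho-1}\dpms_s(x)$, allows passage between the two representations without revisiting the expectation, and in that case one only needs to check that $\frac{1}{\Gamma(1-\rho)\Gamma(\rho)}=\frac{\sin(\pi\rho)}{\pi}$ by the reflection formula.

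There is essentially no obstacle: the argument is a bookkeeping of constants together with the application of Fubini in the rewriting of the expectation. The singularity of $(t-s)^{-\rho}$ at $s=t$, and of $s^{\rho-1}$ at $s=0$ in the second formula, are integrable since $\rho\in(0,1)$, and the finiteness of the whole integral is guaranteed \emph{a posteriori} by Theorem \ref{thmactx}, which already asserts that $f_{\tx}(t)$ is finite. So the entire proof amounts to substituting \eqref{ftxz1} and \eqref{ftxz2} into the formula for $f_{\tx}$ produced in Theorem \ref{thmactx} and simplifying.
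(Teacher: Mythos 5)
Your proposal is correct and matches the paper's own proof, which likewise substitutes \eqref{ftxz1} into the formula of Theorem \ref{thmactx} for the first representation and passes to the second via \eqref{frtpm}; the constant and power-of-$x$ bookkeeping you carry out is exactly what is needed. No gaps.
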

\begin{proof}
The first representation follows by plugging \eqref{ftxz1} in the expression of $f_{\tx}$ given in Theorem \ref{thmactx}.
The second representation is obtained from the first one by using \eqref{frtpm}.
\end{proof}
Doney and Savov provide in \cite{Dosa} the asymptotic behaviour of the density function $\dpm$ at infinity and at zero. These results together with Proposition \ref{actxo} 
allow us to give the analogous results for $f_{\txz}$.
\begin{cor}\label{cadtxz}
There is a constant $C_\infty>0$, such that
\begin{equation}\label{ftyxi}
f_{\txz}(t)\sim C_\infty\, x\, t^{-1-1/\alpha} \;\;\textrm{ as }\;\;t\rightarrow \infty.
\end{equation}
On the other hand, we have
\begin{equation}\label{ftyxz}
 f_{\txz}(t)\sim \frac{k_\infty}{k_0}\,\frac{\sin(\pi\rho)}{\pi\rho^2 }\,x^{-\alpha(1+\rho)}\, t^{\rho}\;\;\textrm{ as }\;\;t\rightarrow 0+,
\end{equation}
\end{cor}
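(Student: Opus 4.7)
The plan is to combine the density representation
\[
f_{\txz}(t)=\frac{\sin(\pi\rho)}{\pi\alpha\rho k_0}\,x^{1-\alpha\rho}\,t^{\rho-1}\,\dpmt_t(x)
\]
from Proposition \ref{actxo} with the asymptotics of the terminal density $\dpmt_t$ of the dual stable meander obtained by Doney and Savov in \cite{Dosa}. The stable meander scales, so that $\dpmt_t(x)=t^{-1/\alpha}\,\dpm^{(m,1)}_1(xt^{-1/\alpha})$, and both limits in the corollary reduce to one-variable asymptotics of the density $\dpm^{(m,1)}_1$ of the terminal value of the length-one dual meander.

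For \eqref{ftyxi}, the argument $y=xt^{-1/\alpha}$ tends to $0+$ as $t\to\infty$. By Doney and Savov, $\dpm^{(m,1)}_1(y)$ is of order $y^{\alpha\rho}$ near the origin; inserting this into the scaled representation collapses the powers of $x$ to $x^1$ and the powers of $t$ to $t^{-1-1/\alpha}$, producing \eqref{ftyxi} with a definite $C_\infty$ built from $k_0,\rho,\alpha$ and the Doney--Savov prefactor (the statement only requires existence of the constant, so no further simplification is needed).

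For \eqref{ftyxz}, the same scaling now sends $y=xt^{-1/\alpha}\to\infty$ as $t\to 0+$, and the relevant Doney--Savov input is the heavy-tailed behaviour $\dpm^{(m,1)}_1(y)\sim D\,y^{-\alpha-1}$, reflecting $P(X_1>y)\sim k_\infty y^{-\alpha}$; substituting gives the claimed $x^{-\alpha(1+\rho)}t^\rho$ scaling. The main obstacle here is identifying the explicit multiplicative constant $\tfrac{k_\infty}{k_0}\tfrac{\sin(\pi\rho)}{\pi\rho^2}$. A clean route that avoids tracking Doney--Savov constants is to fix the prefactor by Tauberian considerations: Proposition \ref{prop1} already supplies
\[
P(\txz\leq u)\sim \frac{k_\infty}{k_0}\,\frac{\sin(\pi\rho)}{\pi\rho^2 (1+\rho)}\,x^{-\alpha(1+\rho)}\,u^{1+\rho},
\]
and the representation above makes $f_{\txz}$ continuous in $t$ and regularly varying of positive index $\rho$ at $0+$. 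The monotone density theorem then upgrades the regular variation of the distribution function to \eqref{ftyxz} with the stated constant, which completes the proof.
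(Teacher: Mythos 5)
Your overall route is the paper's: both asymptotics are read off from the representation \eqref{ftxz2} of Proposition \ref{actxo} together with the meander scaling $\dpmt_t(x)=t^{-1/\alpha}\,\dpm(x\,t^{-1/\alpha})$ and the Doney--Savov asymptotics of $\dpm$ at $0+$ and at $\infty$; for \eqref{ftyxi} your argument is exactly the one in the paper. The only place you deviate is in identifying the constant in \eqref{ftyxz}: the paper simply substitutes the explicit Doney--Savov tail constant, $\dpm(y)\sim\frac{\alpha k_\infty}{\rho}\,y^{-(\alpha+1)}$ as $y\to\infty$, whereas you propose to calibrate the prefactor against Proposition \ref{prop1}.

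That calibration can be made to work, but not with the tool you name. The monotone density theorem requires $f_{\txz}$ to be ultimately monotone near $0$, which is not known; the paper even has a remark immediately after this corollary stating that \eqref{ftyxz} cannot be deduced from Proposition \ref{prop1} for precisely this reason, and continuity or regular variation of the density is not a substitute for monotonicity in that theorem. Fortunately you do not need it. Once the representation and the Doney--Savov tail estimate give $f_{\txz}(t)\sim A\,t^{\rho}$ as $t\to 0+$ for \emph{some} constant $A>0$ (this already follows from your substitution, with $A$ an unevaluated positive multiple of the Doney--Savov prefactor), elementary integration of the asymptotic equivalence yields $P(\txz\leq u)\sim \frac{A}{1+\rho}\,u^{1+\rho}$, and matching with Proposition \ref{prop1} identifies $A=\frac{k_\infty}{k_0}\frac{\sin(\pi\rho)}{\pi\rho^{2}}\,x^{-\alpha(1+\rho)}$. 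With that repair your proof is complete; the detour buys you independence from the exact value of the Doney--Savov constant, at the cost of invoking Proposition \ref{prop1}, which the paper's shorter proof does not need.
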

\begin{proof}
By the scaling property and Proposition \ref{actxo}, we have
$$f_{\txz}(t)=\frac{\sin(\pi\rho)}{\pi\alpha\rho\, k_0}\,x^{1-\alpha\rho}\,t^{\rho-1-1/\alpha}\,\dpm(x\,t^{-1/\alpha}).$$
From \cite[Theorem 1]{Dosa}, we know that, there is a constant $C>0$, such that
$$\dpm(y)\sim C\, y^{\alpha\rho} \;\;\textrm{ as }\;\;y\rightarrow 0+,$$
and that
$$\dpm(y)\sim \frac{\alpha\, k_\infty}{\rho}\, y^{-(\alpha+1)} \;\;\textrm{ as }\;\;y\rightarrow \infty.$$
The results follow.
\end{proof}
\begin{rmq}
At the best of my knowledge, there is no closed form formula available in the literature for the constant $C$. However, from \eqref{lamp1} and \cite[Theorem 3]{Kupa}, we can deduce for $\alpha\notin\Qb$ that
$$f_{\txz}(t)\sim \bar{c} \,c_{0,0}\, x\, t^{-1-1/\alpha} \;\;\textrm{ as }\;\;t\rightarrow \infty,$$
where 
$$\bar{c}=c\,\frac{\left|\sec\left(\pi\alpha(2\rho-1)/2\right)\right|\,\pi}{2\sin(\pi\alpha/2)\Gamma(1+\alpha)},\quad\quad c_{0,0}=\frac{\Gamma(1+1/\alpha)}{\Gamma(1-\alpha\rho)\Gamma(\alpha(1-\rho))}M(1/\alpha),$$
and $M$ is a function defined in \cite[p. 121]{Kupa} with the help of some double gamma functions. Comparing with \eqref{ftyxi}, we see that, when $\alpha\notin\Qb$, we have $$C_\infty=\bar{c}\, c_{0,0}\quad\textrm{and}\quad C=\bar{c}\, c_{0,0}\frac{\pi\alpha\rho k_0}{\sin(\pi\rho)}.$$
\end{rmq}

\begin{rmq}
Note that \eqref{ftyxz} could not be deduced directly from Proposition \ref{prop1} since we do not know whether or not the density function $f_{\txz}$ is ultimately monotone.
\end{rmq}
\begin{rmq}
 From Theorem \ref{thmactx} and \eqref{ftyxz}, we infer that the density function $f_{\tx}$ is strictly positive in $(0,\infty)$. On the other hand, we know from \cite[Lemma 3]{Uri} and \cite[Theorem 1]{Chaum3} that the density function $\dpm$ is strictly positive. Consequently, the density function $f_{\txz}$ is also strictly positive. Therefore, the distributions of $\tx$ and $\txz$ are equivalent and
 $$\frac{d\mu_x^0}{d\mu_x}(t)=\frac{1}{k_0\, \rho\,x^{\alpha\rho}}\,f_{\txz}(t){\left(\frac{1}{t}\,\int\limits_{0}^t\frac{f_{\txz}(s)}{(t-s)^\rho} ds\right)}^{ -1},$$
 where $\mu_x^0(\cdot):=P(\txz\in\cdot)$ and $\mu_x(\cdot):=P(\tx\in\cdot)$.
 \end{rmq}

\begin{rmq}
The results of Corollary \ref{cadtxz} can be compared to the analogous results for $f_{\tx}$, which state that (see \cite[Remark 5]{Dosa})
$$f_{\tx}(t)\sim \rho\, k_0\, x^{\alpha\rho}\, t^{-(1+\rho)} \;\;\textrm{ as }\;\;t\rightarrow \infty,$$
and
$$f_{\tx}(t)\sim k_{\infty} \,x^{-\alpha}\;\;\textrm{ as }\;\;t\rightarrow 0+.$$
\end{rmq}
\subsection{An Identity in Law}\label{5.4} We end this work with an identity in law, which is in some sense, the analogue of \eqref{is} for $\txz$. Note first that from the asymptotic behaviour of $\dpm$ (see \cite[Theorem 1]{Dosa}), we have
$$0<\dE^{(m,1)}[X_1^{-\alpha\rho}]<\infty.$$
Consequently, we can define the probability measure $P^{(0)}$ on $\Fs_1$ by
$$P^{(0)}(\Lambda)=\frac{\dE^{(m,1)}\lc 1_\Lambda \,X_1^{-\alpha\rho}\rc}{\dE^{(m,1)}[X_1^{-\alpha\rho}]}.$$
\begin{cor}
For all $x>0$, the law of $\txz$ equals the law of $x^\alpha \,X_1^{-\alpha}$ under $P^{(0)}$.
\end{cor}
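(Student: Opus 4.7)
The plan is to identify the density of $Y:=x^\alpha X_1^{-\alpha}$ under $P^{(0)}$ and to match it with the density of $\txz$ provided by Proposition \ref{actxo}.

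First, I would invoke the scaling property of the dual stable meander, namely $(X_s,\dP^{(m,t)})\overset{(d)}{=}(t^{1/\alpha}X_{s/t},\dP^{(m,1)})$, which gives $\dpmt_t(x)=t^{-1/\alpha}\dpm(xt^{-1/\alpha})$. Plugging this into \eqref{ftxz2} yields
$$f_{\txz}(t)=\frac{\sin(\pi\rho)}{\pi\alpha\rho\, k_0}\,x^{1-\alpha\rho}\,t^{\rho-1-1/\alpha}\,\dpm(xt^{-1/\alpha}),\quad t>0.$$

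Second, I would compute the density of $Y$ under $P^{(0)}$. By the defining formula of $P^{(0)}$, for any bounded Borel function $g$,
$$E^{(0)}[g(Y)]=\frac{1}{\dE^{(m,1)}[X_1^{-\alpha\rho}]}\int_0^\infty g(x^\alpha y^{-\alpha})\,y^{-\alpha\rho}\,\dpm(y)\,dy,$$
and the change of variables $t=x^\alpha y^{-\alpha}$ (so $y=xt^{-1/\alpha}$ and $dy=-\tfrac{x}{\alpha}t^{-1/\alpha-1}dt$) transforms this into $\int_0^\infty g(t)\,\varphi(t)\,dt$, with
$$\varphi(t)=\frac{x^{1-\alpha\rho}}{\alpha\,\dE^{(m,1)}[X_1^{-\alpha\rho}]}\,t^{\rho-1-1/\alpha}\,\dpm(xt^{-1/\alpha}).$$

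Third and finally, I would observe that both $f_{\txz}$ and $\varphi$ are probability densities on $(0,\infty)$ and are proportional to $t^{\rho-1-1/\alpha}\dpm(xt^{-1/\alpha})$, hence they must coincide. This gives the announced identity in law and, as a byproduct, evaluates the normalizing constant $\dE^{(m,1)}[X_1^{-\alpha\rho}]=\pi\rho\,k_0/\sin(\pi\rho)$. I do not foresee any serious obstacle: beyond the standard scaling of the dual meander, everything reduces to a one-dimensional change of variables and to the observation that two positive measures with the same shape and the same total mass are equal.
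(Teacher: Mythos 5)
Your proposal is correct and follows essentially the same route as the paper: both reduce \eqref{ftxz2} via the meander scaling to the density $\frac{\sin(\pi\rho)}{\pi\alpha\rho\, k_0}\,x^{1-\alpha\rho}\,t^{\rho-1-1/\alpha}\,\dpm(x\,t^{-1/\alpha})$, perform the change of variables $y=x\,t^{-1/\alpha}$, and obtain the normalizing constant $\dE^{(m,1)}[X_1^{-\alpha\rho}]=\pi\rho\,k_0/\sin(\pi\rho)$ by matching total masses (the paper does this by taking $g=1$). The only cosmetic difference is that you match two densities while the paper transforms the expectation $E[g(\txz)]$ directly.
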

\begin{proof}
Let $g:[0,\infty)\rightarrow[0,\infty)$ be a measurable function. By Proposition \ref{actxo} and the scaling property, we have
$$E[g(\txz)]=\frac{\sin(\pi\rho)}{\pi\alpha\rho\, k_0}\,x^{1-\alpha\rho}\,\izi g(t)\, t^{\rho-1-1/\alpha}\,\dpm(x\,t^{-1/\alpha})\, dt.$$
Thus, making the change of variable $y=x\,t^{-1/\alpha}$, we get
$$E[g(\txz)]=\frac{\sin(\pi\rho)}{\pi\rho\, k_0}\,\izi g\left( x^\alpha\,y^{-\alpha}\right)\, y^{-\alpha\rho}\,\dpm(y)\, dy,$$
or equivalently
\begin{equation}\label{ilaux}
E[g(\txz)]=\frac{\sin(\pi\rho)}{\pi\rho\, k_0}\,\dE^{(m,1)}\lc  X_1^{-\alpha\rho}\,g\left( x^\alpha\,X_1^{-\alpha}\right)\rc.
 \end{equation}
Taking $g=1$ in the previous identity, we obtain
$$\dE^{(m,1)}\lc  X_1^{-\alpha\rho}\rc= \frac{\pi\rho\, k_0}{\sin(\pi\rho)}.$$
The result follows by plugging this expression in \eqref{ilaux}.
\end{proof}
\textbf{Acknowledgments.} I would like to thank the anonymous referee whose valuable suggestions and comments contributed to the quality of this version of the paper.

\bibliographystyle{acm}
\bibliography{reference}
\end{document}